\documentclass[a4paper,10pt]{amsart}

\usepackage{amsmath}
\usepackage{amssymb}
\usepackage{graphicx}
\usepackage{xcolor}

\usepackage{amsthm}
\theoremstyle{definition}
\newtheorem{proposition}{Proposition}[section]
\newtheorem{theorem}{Theorem}[section]

\newtheorem{lemma}{Lemma}[section]

\newtheorem{remark}{Remark}[section]

\theoremstyle{remark}

\begin{document}

\title[Orthogonal positive sums in $C^*$-algebras]{On the set of orthogonal positive sums in a $C^*$-algebra}

\author{Ali Taghavi}
\address{Department of  Mathematics, Qom University of Technology, Qom, Iran.}
\email{taghavi@qut.ac.ir}

\date{\today}

\subjclass[2020]{Primary 46L05; Secondary 46L10}

\keywords{$C^{*}$-algebra, positive elements, orthogonality, projections, minimal projections, extreme points}

\begin{abstract}
For a $C^*$-algebra $A$ let $\mathcal I(A)$ be the set of all elements
$a+b$ with $a,b\in A_+$, $ab=0$ and $\|a\|=\|b\|=1$. We study the
algebraic, metric and topological properties of $\mathcal I(A)$. We
describe $\mathcal I(A\oplus B)$ and show that
$\mathcal I(A/J)\subseteq\pi(\mathcal I(A))$ for the quotient map
$\pi:A\to A/J$. For finite-dimensional $A$ we prove
$\mathcal I(A)=\operatorname{Sph}^+_A(E(A))$, where $E(A)$ is the set of
minimal projections of $A$ and $\operatorname{Sph}^+_A(E)$ is the set of
positive norm-one elements at distance one from every element of $E$;
we also show that the analogous statement with the minimal projections
of the bidual fails in general. When $\mathcal I(A)\neq\varnothing$ the
distance from $1$ to $\mathcal I(A)$ is $0$ or $1$, according to whether
$A$ has a nontrivial projection. Moreover, $\mathcal I(A)$ is norm
compact if and only if $A$ is finite-dimensional, closedness of
$\mathcal I(A)$ passes to hereditary subalgebras, $\mathcal I(A)$ is
norm closed for every von Neumann algebra, and the extreme points of
$\mathcal I(A)$ are exactly its projections. For a compact, locally path
connected space $X$ the set $\mathcal I(C(X))$ is closed, whereas
$\mathcal I(\ell_c(A))$ fails to be closed when $A$ is unital without
nontrivial projections.
\end{abstract}

\maketitle

\section{Introduction}

Let $A$ be a $C^*$-algebra, and let $A_+$ denote its positive cone and
$S(A)_+=\{a\in A_+:\|a\|=1\}$ the positive part of its unit sphere.
Consider
\[
\mathcal D(A)=\{(a,b)\in A_+^2:\ ab=0,\ \|a\|=\|b\|=1\},
\qquad
\phi_A:\mathcal D(A)\to S(A)_+,\quad \phi_A(a,b)=a+b,
\]
and put
\[
\mathcal I(A)=\operatorname{Im}\phi_A .
\]
Thus $\mathcal I(A)$ consists exactly of the positive norm-one elements
which can be written as a sum of two orthogonal positive norm-one
elements. The set $\mathcal I(A)$ is invariant under all
$*$-isomorphisms, in particular under unitary conjugation, and so it is
a natural isomorphism invariant of $A$. The purpose of this paper is to
investigate which properties of $A$ are reflected in the algebraic,
metric and topological structure of $\mathcal I(A)$.

\medskip
\noindent\textbf{Metric properties.} For a unital $C^*$-algebra $A$,
put $d(A)=\operatorname{dist}(1,\mathcal I(A))$. When
$\mathcal I(A)\ne\varnothing$ we have $d(A)\in\{0,1\}$, and $d(A)=0$ if
and only if $A$ contains a nontrivial projection
(Theorem~\ref{thm:main}(1)). This dichotomy was already observed in the
author's earlier note \cite{Taghavi2013}; a complete proof is included
here for the reader's convenience. A second metric description of
$\mathcal I(A)$ is obtained by comparing it with the sets
\[
\operatorname{Sph}^+_A(E)=\{x\in S(A)_+:\ \|x-e\|=1\text{ for every }e\in E\},
\qquad E\subseteq S(A)_+ .
\]
Such sphere conditions are the basis of a line of work on geometric
characterizations of projections among positive norm-one elements,
initiated by Nagy \cite{Nagy2013,Nagy2018} and continued by Peralta
\cite{Peralta2018} and by Peralta and Saavedra
\cite{PeraltaSaavedra2026}. Our contribution is of a different nature
and is tied to the orthogonal decomposition defining $\mathcal I(A)$: if
$A$ is finite-dimensional and $E(A)$ is the set of minimal projections
of $A$, then
\[
\mathcal I(A)=\operatorname{Sph}^+_A(E(A)),
\]
and both sets coincide with the set of $x\in S(A)_+$ for which the
eigenvalue $1$ has multiplicity at least two
(Proposition~\ref{prop:sph}). The finite-dimensional hypothesis cannot
simply be dropped, since $E(A)$ is empty for many infinite-dimensional
algebras; Remark~\ref{rem:bidual} below shows that replacing $E(A)$ by
the minimal projections of the bidual $A^{**}$ does not repair the
statement either.

\medskip
\noindent\textbf{Algebraic constructions.} In Section~2 we describe
$\mathcal I(A\oplus B)$ in terms of $\mathcal I(A)$, $\mathcal I(B)$ and
the positive unit balls of $A$ and $B$, and we show that
$\mathcal I(A/J)\subseteq\pi(\mathcal I(A))$ for every closed ideal $J$,
with an example showing that the reverse inclusion fails.

\medskip
\noindent\textbf{Topological properties.} Section~3 is devoted to the
norm topology of $\mathcal I(A)$. Theorem~\ref{thm:main} shows that
$\mathcal I(A)$ is norm compact if and only if $A$ is finite-dimensional;
that $\mathcal I(A)$ is finite and nonempty only for $A\cong\mathbb C^2$
or $A\cong M_2(\mathbb C)$ (and then $\mathcal I(A)=\{1\}$); that
closedness of $\mathcal I(A)$ passes to hereditary subalgebras; that
$\mathcal I(A)$ is norm closed whenever $A$ is a von Neumann algebra;
and that the extreme points of $\mathcal I(A)$ are precisely its
projections. For commutative algebras, Theorem~\ref{thm:CX} shows that
$\mathcal I(C(X))$ is closed whenever $X$ is a compact, locally path
connected space, and Theorem~\ref{thm:ell} gives closedness results for
$\mathcal I(\ell^\infty(A))$ together with an example, based on
$\ell_c(A)$, in which closedness fails. Several questions are
collected in the remarks.

\medskip
\noindent\textbf{Scope.} The graph of positive zero divisors and the
zero-divisor real rank, which were also introduced in \cite{Taghavi2013},
are not treated here; they are the subject of a separate paper.

\section{Positive  zero  divisors and $\mathcal I(A)$}

In this section, we associate with a unital $C^*$-algebra $A$,  three basic objects—the space $D(A)$ of normalized orthogonal positive pairs, the map $\phi_A$ and its image $\mathcal I(A)$. This construction will serve as the foundation for the algebraic, topological, and geometric investigations developed in the subsequent sections.

Let  $A$  be  a  $C^*$ algebra.  We  denote by  $A_{+}$ its  cone of  positive  elements  and  put  $$S(A)_+=\{a\in  A_+:\parallel a\parallel=1\}$$ We define $$\mathcal D(A)=\{(a,b)\in A_{+}^2\mid ab=0, \|a\|=\|b\|=1\}$$
We  consider  the  map  $$\phi_A:\mathcal D(A)\to  S(A)_{+},\qquad \phi_A(a,b)=a+b$$

For  $(a,b)\in \mathcal D(A)$ the  orthogonality relation $ab=0$ implies $$\|a+b\|=\max \{\|a\|,\|b\|\}=1$$ Hence  $\phi_A$ indeed takes its  value  in  $S(A)_+$.  We  denote its image  by    $$\mathcal I(A)=\operatorname{Im}\phi_A.$$
We  define \[
d(A)=\operatorname{dist}(1,\mathcal I(A)).
\] The following elementary properties will be useful. If $\alpha:A\to B$  is  a $C^*$ algebra isomorphism then  $$\alpha(\mathcal I(A))=\mathcal I(B).$$ In particular, every automorphism of A preserves $\mathcal I(A)$ and hence $\mathcal I(A)$ is invariant under  unitary conjugation. Note that $\mathcal D(A)$ and $\mathcal I(A)$ make sense for every $C^*$-algebra $A$; the unit is needed only for $d(A)$.

The positive elements of the unit ball of an algebra $A$ will be denoted by
\[
A_{1+}=\{a\in A_+:\|a\|\leq 1\}.
\]
We next record the behavior of $\mathcal I(A)$ under direct sums:
\begin{proposition}\label{prop:direct-sum}Let $A$ and $B$ be unital $C^*$-algebras. Then
\[
\begin{aligned}
\mathcal I(A\oplus B)
={}&
\bigl(\mathcal I(A)\times B_{1+}\bigr)
\cup
\bigl(A_{1+}\times\mathcal I(B)\bigr)\\
&\cup
\bigl(S(A)_+\times S(B)_+\bigr).
\end{aligned}
\]
\end{proposition}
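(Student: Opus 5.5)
We identify $A\oplus B$ with pairs $(x,y)$, where all operations are coordinatewise and $\|(x,y)\|=\max\{\|x\|,\|y\|\}$. Positivity, orthogonality and products are also coordinatewise. The proof is a double inclusion. The only real work is to show that any element of $\mathcal I(A\oplus B)$ falls into one of the three pieces; this comes from a case split on where the two orthogonal summands attain their norm.

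For $\subseteq$, take $(x,y)=(a_1,a_2)+(b_1,b_2)$ with $(a_1,a_2),(b_1,b_2)\in\mathcal D(A\oplus B)$. Then $a_1b_1=0$, $a_2b_2=0$, all entries are positive contractions, and $\max\{\|a_1\|,\|a_2\|\}=\max\{\|b_1\|,\|b_2\|\}=1$. By the orthogonality remark before the proposition, $\|x\|=\max\{\|a_1\|,\|b_1\|\}\le 1$, and similarly $\|y\|\le1$. We split into four cases according to which coordinates carry norm one.
\begin{itemize}
\item If $\|a_1\|=\|b_1\|=1$, then $x\in\mathcal I(A)$ and $y\in B_{1+}$.
\item If $\|a_2\|=\|b_2\|=1$, then $x\in A_{1+}$ and $y\in\mathcal I(B)$.
\item If $\|a_1\|=1$ and $\|b_2\|=1$, then $\|x\|\ge\|a_1\|=1$, since $0\le a_1\le x$. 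Likewise $\|y\|=1$, so $(x,y)\in S(A)_+\times S(B)_+$.
\item The case $\|a_2\|=\|b_1\|=1$ is symmetric to the previous one.
\end{itemize}
Since $(a_1,a_2)$ and $(b_1,b_2)$ each have norm one, one of these cases must occur.

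For $\supseteq$, we use the units. Given $x\in\mathcal I(A)$ with $x=a+b$ and $y\in B_{1+}$, write
\[
(x,y)=(a,y)+(b,0).
\]
The two summands are orthogonal, positive, and each has norm one, because $\|a\|=\|b\|=1$ and $\|y\|\le1$. The case $A_{1+}\times\mathcal I(B)$ is symmetric: write $(x,y)=(x,a')+(0,b')$. For $(x,y)\in S(A)_+\times S(B)_+$, write
\[
(x,y)=(x,0)+(0,y),
\]
which is an orthogonal sum of two norm-one positive elements.

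Interestingly, units are not actually needed for this argument. The main point requiring care is the case analysis in $\subseteq$, specifically checking that $\|x\|=1$ in the mixed cases. This uses the order property $0\le a\le x\Rightarrow\|a\|\le\|x\|$, where $x=a_1+b_1$ with $b_1\ge0$. If $y=0$ is allowed in $B_{1+}$, the first piece includes elements like $(x,0)$, which is consistent with the decomposition above.
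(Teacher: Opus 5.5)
Your proposal is correct and follows essentially the same route as the paper: the same four-way case split on which coordinates of the two orthogonal summands carry norm one, and the same explicit decompositions $(a,y)+(b,0)$, $(x,a')+(0,b')$ and $(x,0)+(0,y)$ for the reverse inclusion. The only difference is cosmetic: in the mixed cases you get $\|x\|=1$ from $0\le a_1\le x$, whereas the paper uses $\|x\|=\max\{\|a_1\|,\|b_1\|\}$. Your remark that the units are never used is also accurate.
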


\begin{proof}
Let $(x,y)\in\mathcal I(A\oplus B)$, say $(x,y)=(a_1,a_2)+(b_1,b_2)$ with
$(a_1,a_2),(b_1,b_2)\in(A\oplus B)_+$ of norm one and
$(a_1,a_2)(b_1,b_2)=0$. Then $a_1b_1=0$, $a_2b_2=0$, all four elements
are positive contractions, and
$\max\{\|a_1\|,\|a_2\|\}=\max\{\|b_1\|,\|b_2\|\}=1$. If
$\|a_1\|=\|b_1\|=1$, then $x=a_1+b_1\in\mathcal I(A)$ and
$y=a_2+b_2\in B_{1+}$. If $\|a_2\|=\|b_2\|=1$, then
$y\in\mathcal I(B)$ and $x\in A_{1+}$. In the two remaining cases
($\|a_1\|=\|b_2\|=1$ or $\|a_2\|=\|b_1\|=1$) we have
$\|x\|=\max\{\|a_1\|,\|b_1\|\}=1$ and $\|y\|=\max\{\|a_2\|,\|b_2\|\}=1$,
so $(x,y)\in S(A)_+\times S(B)_+$.

Conversely, if $x=a+b\in\mathcal I(A)$ and $y\in B_{1+}$, then
$(x,y)=(a,y)+(b,0)$ with orthogonal norm-one positive summands; the case
$x\in A_{1+}$, $y\in\mathcal I(B)$ is symmetric; and if
$x\in S(A)_+$, $y\in S(B)_+$, then $(x,y)=(x,0)+(0,y)$.
\end{proof}

We next record a quotient property of \(\mathcal I(A)\), which will also be useful in the subsequent discussion of closedness.

\begin{lemma}\label{lem:lift}
Let $A$ be a $C^*$-algebra, let $J$ be a closed two-sided ideal of $A$,
let $\pi:A\to A/J$ be the quotient map, and let $e,f\in(A/J)_+$ satisfy
$ef=0$. Let $x,y\in A_+$ with $\pi(x)=e$ and $\pi(y)=f$, and put
\[
z=(x-y)_+,\qquad w=(x-y)_-.
\]
Then $z,w\in A_+$, $zw=0$, $\pi(z)=e$, $\pi(w)=f$, $\|z\|\leq\|x\|$ and
$\|w\|\leq\|y\|$.
\end{lemma}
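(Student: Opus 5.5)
The plan is to work with the self-adjoint element $h=x-y\in A_{sa}$ and its Jordan decomposition $h=z-w$, where $z=(x-y)_+$ and $w=(x-y)_-$. By the continuous functional calculus, $z,w\in A_+$ and $zw=0$. So the first two assertions are immediate, and the work lies in computing $\pi(z)$, $\pi(w)$ and bounding the norms.

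For the images under $\pi$ I will use that $\pi$ is a $*$-homomorphism, so it commutes with continuous functional calculus on self-adjoint elements. Since $t\mapsto t_+$ vanishes at $0$, this holds even without units:
\[
\pi(z)=\pi(h)_+=(e-f)_+ ,\qquad \pi(w)=(e-f)_- .
\]
Next I use $ef=0$ together with $e,f\ge 0$. This gives $fe=(ef)^*=0$, so $e$ and $f$ commute and generate a commutative $C^*$-algebra. In its Gelfand picture $e$ and $f$ are nonnegative functions with disjoint supports. Hence $(e-f)_+=e$ and $(e-f)_-=f$; equivalently, $e-f$ is a decomposition into orthogonal positive parts, and such a decomposition is unique. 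This gives $\pi(z)=e$ and $\pi(w)=f$.

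For the norm bounds, observe that $-y\le x-y\le x$ because $x,y\ge0$. Passing to the commutative $C^*$-algebra generated by $h$ does not apply directly, since $x$ does not commute with $h$. So I argue through the support. Let $p$ be the spectral projection of $h$ for $(0,\infty)$ in $A^{**}$, which commutes with $h$. Then $z=php\le pxp$, so
\[
\|z\|\le\|pxp\|\le\|x\|.
\]
Alternatively, I can use the functional characterization $\|z\|=\sup\{\varphi(h)\}$ over states $\varphi$ supported where $h\ge0$, together with $\varphi(h)\le\varphi(x)\le\|x\|$. Symmetrically, $w=-php'$ with $p'$ the spectral projection for $(-\infty,0)$, and $w\le p'yp'$, so $\|w\|\le\|y\|$.

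The step I expect to be the main obstacle is this norm estimate. The inequality $h\le x$ does not imply $h_+\le x$ in general, because $t\mapsto t_+$ is not operator monotone. So I must compress by the spectral projection rather than use monotonicity, and I need to make that precise in the bidual, or via an approximate version using $g_n(h)$ with continuous $g_n$. The simplest rigorous route is to take a unit vector $\xi$ in a faithful representation with $\langle z\xi,\xi\rangle$ close to $\|z\|$ and $\xi\in\operatorname{ran}p$, and then note $\langle z\xi,\xi\rangle=\langle h\xi,\xi\rangle\le\langle x\xi,\xi\rangle$.
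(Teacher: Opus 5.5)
Your proof is correct. For $z,w\ge 0$, $zw=0$, $\pi(z)=e$ and $\pi(w)=f$ it is the paper's argument: functional calculus commutes with $\pi$, and the Gelfand picture of $C^*(e,f)$ gives $(e-f)_+=e$ and $(e-f)_-=f$.

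You diverge only at the norm estimate, and the obstacle you anticipate there does not arise. You never need $h_+\le x$, only $h_+\le\|x\|\cdot 1$. The paper observes that $h=x-y\le x\le\|x\|\cdot 1$ (in the unitization if necessary), so $\sigma(h)\subseteq(-\infty,\|x\|]$. Inside the commutative algebra $C^*(h)$ this gives $\|h_+\|=\max(\{0\}\cup\sigma(h))\le\|x\|$. The comparison is with a scalar, which commutes with $h$. So the non-commutation of $x$ with $h$, and the failure of operator monotonicity of $t\mapsto t_+$, play no role.

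Your compression argument is also valid. With $p=\chi_{(0,\infty)}(h)$ in $A^{**}$ (or in a faithful representation), you get $z=php\le pxp$. This yields the slightly stronger operator inequality $z\le pxp$, but it requires passing to the bidual, which the paper's one-line spectral bound avoids.

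One slip to fix: in the symmetric step, write $w=-p'hp'=p'(y-x)p'\le p'yp'$. As you wrote it, $-php'$ is zero, because $p$ and $p'$ are orthogonal spectral projections of $h$.
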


\begin{proof}
That $z,w\geq0$ and $zw=0$ is standard. By the compatibility of the
continuous functional calculus with $*$-homomorphisms,
$\pi(z)=(\pi(x-y))_+=(e-f)_+$ and $\pi(w)=(e-f)_-$. Since $e,f\geq0$
commute and $ef=0$, every character $\chi$ of the commutative
$C^*$-algebra $C^*(e,f)$ satisfies $\chi(e)\chi(f)=0$, so
$(\chi(e)-\chi(f))_+=\chi(e)$ and $(\chi(e)-\chi(f))_-=\chi(f)$. Hence
$(e-f)_+=e$ and $(e-f)_-=f$. Finally, $x-y\leq x$ gives
$\sigma(x-y)\subseteq(-\infty,\|x\|]$, hence $\|z\|\leq\|x\|$, and
$y-x\leq y$ gives $\|w\|=\|(y-x)_+\|\leq\|y\|$.
\end{proof}

\begin{proposition}[Lifting positive zero-divisor sums through quotients]
Let $A$ be a $C^*$-algebra, let $J$ be a closed two-sided ideal of $A$,
and let
\[
\pi:A\longrightarrow A/J
\]
be the canonical quotient map. Then
\[
\mathcal I(A/J)\subseteq \pi(\mathcal I(A)).
\]
\end{proposition}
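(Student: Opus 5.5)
Given $u\in\mathcal I(A/J)$, write $u=e+f$ with $e,f\in(A/J)_+$, $ef=0$ and $\|e\|=\|f\|=1$. The plan is to lift $e$ and $f$ to positive elements of the unit ball of $A$, apply Lemma~\ref{lem:lift} to make the lifts orthogonal, and then adjust the norms so that both summands have norm exactly one without changing their images under $\pi$.

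\emph{Step 1 (contractive positive lifts).} Use the standard fact that a positive element of a quotient has a positive lift of the same norm. Concretely, take any self-adjoint lift $h$ of $e$, pass to $h_+$ (which still lifts $e_+=e$), and then apply the function $t\mapsto\min(t,1)$. Since $\|e\|=1$, this function acts as the identity on $\sigma(e)$, so the result is still a lift of $e$. This produces $x\in A_+$ with $\pi(x)=e$ and $\|x\|\le1$. Do the same for $f$ to get $y\in A_+$ with $\pi(y)=f$ and $\|y\|\le1$.

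\emph{Step 2 (orthogonalize).} Apply Lemma~\ref{lem:lift} to obtain $z=(x-y)_+$ and $w=(x-y)_-$. These satisfy $z,w\ge0$, $zw=0$, $\pi(z)=e$, $\pi(w)=f$, and $\|z\|,\|w\|\le1$. Since $\pi$ is contractive, $1=\|e\|\le\|z\|$, so in fact $\|z\|=1$, and likewise $\|w\|=1$. Hence $(z,w)\in\mathcal D(A)$ and $\pi(z+w)=e+f=u$, so $u\in\pi(\mathcal I(A))$.

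The only point needing care is Step~1, namely producing lifts of norm at most one. Without this bound, $z$ could have norm larger than one, and rescaling $z$ would destroy the relation $\pi(z)=e$. The truncation $t\mapsto\min(t,1)$ handles this: it commutes with $\pi$ by functional calculus, and it fixes $e$ because $\sigma(e)\subseteq[0,1]$. Once that is in place, Lemma~\ref{lem:lift} together with the contractivity of $\pi$ forces both norms to equal one automatically.
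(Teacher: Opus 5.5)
Your proof is correct and matches the paper's argument: lift $e,f$ to positive contractions, orthogonalize via $(x-y)_\pm$ using Lemma~\ref{lem:lift}, then use contractivity of $\pi$ to force $\|z\|=\|w\|=1$. The only difference is that you justify the existence of positive contractive lifts (self-adjoint lift, then positive part, then truncation by $t\mapsto\min(t,1)$), a step the paper simply asserts.
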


\begin{proof}
Let $b\in\mathcal I(A/J)$. Then there exist
$e,f\in(A/J)_+$ such that
\[
b=e+f,\qquad ef=0,\qquad \|e\|=\|f\|=1.
\]
Choose positive contractions $x,y\in A$ such that
$\pi(x)=e$ and $\pi(y)=f$. The use of positive and negative parts to
obtain orthogonal positive elements is closely related to the lifting
methods in the theory of ideal perturbations; see \cite{AP,Lee2004}.
Let $z=(x-y)_+$ and $w=(x-y)_-$. By Lemma~\ref{lem:lift},
$z,w\in A_+$, $zw=0$, $\pi(z)=e$, $\pi(w)=f$ and
$\|z\|,\|w\|\leq1$. Consequently,
\[
1=\|e\|=\|\pi(z)\|\leq\|z\|\leq1
\quad\text{and}\quad
1=\|f\|=\|\pi(w)\|\leq\|w\|\leq1,
\]
so $\|z\|=\|w\|=1$. Thus $z+w\in\mathcal I(A)$ and
$\pi(z+w)=e+f=b$. Therefore $b\in\pi(\mathcal I(A))$.
\end{proof}

\begin{remark}
The reverse inclusion does not hold in general. For example, let
$A=\mathbb C^2$ and $J=\mathbb C\oplus0$, so that
$A/J\cong\mathbb C$ and $\mathcal I(\mathbb C)=\varnothing$, whereas
$1=(1,0)+(0,1)\in\mathcal I(A)$ and $\pi(1)\neq0$; hence
$\pi(\mathcal I(A))\neq\mathcal I(A/J)$. In general the quotient map may
decrease the norm of one of the two orthogonal positive summands
occurring in an element of $\mathcal I(A)$. The normalization condition
$\|a\|=\|b\|=1$ in the definition of $\mathcal I(A)$ is essential for
this distinction.
\end{remark}

The preceding results describe several elementary features of the set
$I(A)$. We next compare this set with a metric characterization of
positive norm-one elements in terms of minimal projections. The
underlying idea is that the algebraic decomposition defining $I(A)$
imposes a multiplicity condition on the maximal spectral value of a
positive norm-one element. In finite dimensions, this condition can be
detected directly through distances from minimal projections. This
connects the zero-divisor description of $I(A)$ with the metric
properties of positive operators studied in
\cite{Peralta2018,PeraltaSaavedra2026}.

For a subset $E\subseteq S(A)_+$, we use the notation

$$
\operatorname{Sph}_A^+(E)
=
\bigl\{
x\in S(A)_+:\|x-e\|=1\text{ for every }e\in E
\bigr\}.
$$

In the finite-dimensional setting, taking $E$ to be the set of minimal
projections leads to an exact characterization of $I(A)$.

\begin{proposition}\label{prop:sph}
Let $A$ be a finite-dimensional $C^*$-algebra, faithfully represented as
a block-diagonal subalgebra of $B(H)$, where

$$
A\cong\bigoplus_{i=1}^k M_{n_i}(\mathbb C),
\qquad
H=\bigoplus_{i=1}^k\mathbb C^{n_i}.
$$

Let $E(A)$ denote the set of minimal projections of $A$. For
$x\in S(A)_+$, let

$$
V_1(x)=\{\xi\in H:x\xi=\xi\}
$$

be the eigenspace corresponding to the eigenvalue $1$. Then

$$
x\in I(A)
\quad\Longleftrightarrow\quad
\dim V_1(x)\geq2
\quad\Longleftrightarrow\quad
x\in\operatorname{Sph}_A^+(E(A)).
$$

Consequently,

$$
I(A)=\operatorname{Sph}_A^+(E(A)).
$$

\end{proposition}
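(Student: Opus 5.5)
I will prove the two equivalences separately, working in the block-diagonal picture $x=(x_1,\dots,x_k)$ with $x_i\in M_{n_i}(\mathbb C)_+$, so that $V_1(x)=\bigoplus_i V_1(x_i)$. Recall that the minimal projections of $A$ are exactly the rank-one projections $p=\xi\xi^*$ with $\xi$ a unit vector lying in a single summand $\mathbb C^{n_i}$.

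\emph{First equivalence.} Suppose $x=a+b$ with $a,b\in A_+$, $ab=0$ and $\|a\|=\|b\|=1$. Since $a$ and $b$ commute, they are simultaneously diagonalizable and have orthogonal supports. Pick unit vectors $\xi,\eta$ with $a\xi=\xi$ and $b\eta=\eta$. Then $\xi\in\operatorname{ran}a\subseteq\ker b$ and $\eta\in\ker a$, so $\xi\perp\eta$ and $x\xi=\xi$, $x\eta=\eta$, giving $\dim V_1(x)\ge2$. Conversely, suppose $\dim V_1(x)\ge2$. If some $V_1(x_i)$ has dimension at least $2$, split it into two nonzero orthogonal subspaces with projections $p,q\in M_{n_i}$, both of which commute with $x_i$. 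Put $a=p$ (placed in block $i$) and $b=x-p$. Then $b\ge0$, $\|b\|=1$ because $q\le b$, and $pb=p x-p=0$. Otherwise $V_1(x_i)\ne0$ and $V_1(x_j)\ne0$ for some $i\ne j$, and we take $a=x_i$ placed in block $i$ and $b=x-a$. These are orthogonal since they live in different blocks, and both have norm one.

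\emph{Second equivalence.} For a minimal projection $p=\xi\xi^*$ and $x\in S(A)_+$, the difference $x-p$ is a self-adjoint operator satisfying $-p\le x-p\le x\le1$, so $\|x-p\|\le1$ automatically. Hence $\|x-p\|=1$ if and only if $1\in\sigma(x-p)$ or $-1\in\sigma(x-p)$.

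I will check the following two facts.
\begin{enumerate}
\item $-1\in\sigma(x-p)$ if and only if $x\xi=0$. Indeed, if $\langle(x-p)\zeta,\zeta\rangle=-1$ for a unit vector $\zeta$, then $\langle x\zeta,\zeta\rangle=0$ and $|\langle\zeta,\xi\rangle|=1$, which forces $x\xi=0$.
\item $1\in\sigma(x-p)$ if and only if there is a unit vector $\zeta\in V_1(x)$ with $\zeta\perp\xi$. This follows from $\langle x\zeta,\zeta\rangle-|\langle\zeta,\xi\rangle|^2=1$, which forces $\langle x\zeta,\zeta\rangle=1$ and $\zeta\perp\xi$.
\end{enumerate}

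With these in hand, the forward direction is immediate: if $\dim V_1(x)\ge2$, then $V_1(x)\cap\xi^\perp\neq0$ for every $\xi$, so $\|x-p\|=1$ for all $p\in E(A)$.

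For the converse, suppose $\dim V_1(x)\le1$. Since $\|x\|=1$, we have $V_1(x)=\mathbb C\xi$ for a unit vector $\xi$. Moreover $\xi$ lies in a single block, since $V_1(x)$ decomposes along the blocks and is one-dimensional. Take $p=\xi\xi^*\in E(A)$. Then no unit vector of $V_1(x)$ is orthogonal to $\xi$, and $x\xi=\xi\ne0$. By the two facts above, $\|x-p\|<1$, so $x\notin\operatorname{Sph}^+_A(E(A))$.

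The main obstacle is the spectral bookkeeping in facts (1) and (2): checking that norm equal to $1$ can only be attained through these extremal quadratic-form cases, and that the witness projection in the converse really is minimal in $A$, not merely rank-one in $B(H)$. The block decomposition of $V_1(x)$ handles the latter.
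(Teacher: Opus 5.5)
Your proof is correct and follows the paper's argument essentially step for step. It uses the same eigenvector argument for $x\in I(A)\Rightarrow \dim V_1(x)\ge 2$, the same splitting of the eigenvalue-one spectral projection for the converse, and the same criterion that $\|x-p_\xi\|=1$ exactly when $x\xi=0$ or $V_1(x)\cap\xi^\perp\neq\{0\}$. Your block-by-block treatment of $V_1(x)$ also makes explicit two points the paper only asserts: that the eigenvalue-one projection of rank at least two splits into two nonzero orthogonal projections inside $A$, and that the witness $\xi\xi^*$ in the final step is genuinely a minimal projection of $A$.
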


\begin{proof}
Suppose first that $x\in I(A)$. Then there exist
$a,b\in A_+$ such that

$$
x=a+b,\qquad ab=0,\qquad \|a\|=\|b\|=1.
$$

Since $A$ is finite-dimensional, the norms of $a$ and $b$ are attained.
Thus there exist unit vectors $\xi,\eta\in H$ such that

$$
a\xi=\xi,\qquad b\eta=\eta.
$$

Since $ab=0$, we have

$$
b\xi=ba\xi=0,\qquad a\eta=ab\eta=0.
$$

Therefore

$$
x\xi=(a+b)\xi=\xi,
\qquad
x\eta=(a+b)\eta=\eta.
$$

Moreover,

$$
\langle\xi,\eta\rangle
=
\langle a\xi,\eta\rangle
=
\langle\xi,a\eta\rangle
=0.
$$

Hence $\xi$ and $\eta$ are linearly independent elements of $V_1(x)$,
and consequently

$$
\dim V_1(x)\geq2.
$$

Conversely, suppose that

$$
\dim V_1(x)\geq2.
$$

Since $x$ is positive and finite-dimensional, its spectral decomposition
has the form

$$
x=\sum_{\lambda\in\sigma(x)}\lambda q_\lambda,
$$

where the $q_\lambda$ are mutually orthogonal projections in $A$.
Since $|x|=1$, we have $1\in\sigma(x)$, and

$$
V_1(x)=q_1H.
$$

Thus $\operatorname{rank}q_1\geq2$. We may therefore write

$$
q_1=q_1'+q_1'',
$$

where $q_1'$ and $q_1''$ are nonzero orthogonal projections in $A$.
Define

$$
a=q_1'+\sum_{\lambda\neq1}\lambda q_\lambda,
\qquad
b=q_1''.
$$

Then $a,b\in A_+$,

$$
ab=0,
\qquad
a+b=x.
$$

Both $a$ and $b$ have $1$ in their spectra, so

$$
\|a\|=\|b\|=1.
$$

Hence $x\in I(A)$.

It remains to establish the metric characterization. Let
$p_\xi\in E(A)$ be a minimal projection. In the given block-diagonal
representation, there is a unit vector $\xi\in H$ such that

$$
p_\xi\zeta=\langle\zeta,\xi\rangle\xi
\qquad(\zeta\in H).
$$

For $x\in S(A)_+$, we have $0\leq x\leq1$. We claim that
\begin{equation}
\|x-p_\xi\|=1
\quad\Longleftrightarrow\quad
x\xi=0
\ \text{or}
V_1(x)\cap\xi^\perp\neq{0}.
\label{eq:metric-characterization}
\end{equation}

Indeed, if $x\xi=0$, then

$$
(x-p_\xi)\xi=-\xi,
$$

and hence $\|x-p_\xi\|=1$. On the other hand, if
$\eta\in V_1(x)\cap\xi^\perp$ is a unit vector, then

$$
(x-p_\xi)\eta=\eta,
$$

so again $|x-p_\xi|=1$.

For the converse, suppose that $|x-p_\xi|=1$. Since $x-p_\xi$ is
self-adjoint and

$$
-1\leq x-p_\xi\leq1,
$$

the number $1$ or the number $-1$ is an eigenvalue of $x-p_\xi$.

Suppose first that $-1$ is an eigenvalue. Let $\zeta$ be a unit vector
such that

$$
(x-p_\xi)\zeta=-\zeta.
$$

Taking the inner product with $\zeta$, we obtain

$$
\langle x\zeta,\zeta\rangle
-
|\langle\zeta,\xi\rangle|^2
=-1.
$$

Since $x\geq0$ and
$|\langle\zeta,\xi\rangle|^2\leq1$, equality forces

$$
\langle x\zeta,\zeta\rangle=0,
\qquad
|\langle\zeta,\xi\rangle|=1.
$$

Positivity of $x$ gives $x\zeta=0$, while the second equality implies
that $\zeta$ is a scalar multiple of $\xi$. Thus

$$
x\xi=0.
$$

Now suppose that $1$ is an eigenvalue of $x-p_\xi$. Let $\eta$ be a unit
vector satisfying

$$
(x-p_\xi)\eta=\eta.
$$

Then

$$
x\eta=\eta+\langle\eta,\xi\rangle\xi.
$$

Taking the inner product with $\eta$ gives

$$
\langle x\eta,\eta\rangle
=
1+|\langle\eta,\xi\rangle|^2.
$$

Since $0\leq x\leq1$,

$$
\langle x\eta,\eta\rangle\leq1.
$$

Consequently,

$$
\langle\eta,\xi\rangle=0,
$$

and hence

$$
\eta\in V_1(x)\cap\xi^\perp.
$$

This proves \eqref{eq:metric-characterization}.

Suppose now that $\dim V_1(x)\geq2$. For every
$p_\xi\in E(A)$, the space $V_1(x)$ has dimension at least two, whereas
$\xi^\perp$ has codimension one. Therefore

$$
V_1(x)\cap\xi^\perp\neq\{0\}.
$$

By \eqref{eq:metric-characterization},

$$
\|x-p_\xi\|=1
\qquad\text{for every }p_\xi\in E(A).
$$

Thus

$$
x\in\operatorname{Sph}_A^+(E(A)).
$$

Conversely, suppose that
$x\in\operatorname{Sph}_A^+(E(A))$. If
$\dim V_1(x)=1$, choose a unit vector $\xi$ spanning $V_1(x)$. Then
$p_\xi\in E(A)$. Since $x\xi=\xi$, we have $x\xi\neq0$, while

$$
V_1(x)\cap\xi^\perp=\{0\}.
$$

By \eqref{eq:metric-characterization},

$$
\|x-p_\xi\|\neq1,
$$

contradicting
$x\in\operatorname{Sph}_A^+(E(A))$. Hence

$$
\dim V_1(x)\geq2.
$$

The two equivalences have now been proved, and therefore

$$
I(A)=\operatorname{Sph}_A^+(E(A)).
$$

\end{proof}

\begin{remark}\label{rem:bidual}
The equality in Proposition~\ref{prop:sph} is genuinely tied to the
finite-dimensional setting. In an infinite-dimensional $C^*$-algebra,
the set of minimal projections of $A$ may be empty. For example, this
occurs for many infinite-dimensional algebras of interest, including
UHF algebras, $\mathrm{II}_1$ factors, and the Calkin algebra. In such
cases,

$$
E(A)=\varnothing,
$$

and hence

$$
\operatorname{Sph}_A^+(E(A))=S(A)_+
$$

vacuously, whereas $I(A)$ need not equal the whole positive unit sphere.

One might therefore ask whether the minimal projections of the bidual
$A^{**}$ can provide a suitable replacement for those of $A$. The
corresponding equality
\begin{equation}
I(A)
=
\operatorname{Sph}_{A^{**}}^+(E(A^{**}))\cap A
\tag{2.5}
\end{equation}
does not hold in general. Indeed, consider

$$
A=C([0,1]).
$$

The bidual $A^{**}$ contains minimal projections, and for every minimal
projection $p\in E(A^{**})$ one has $p\neq1$ and hence

$$
\|1-p\|=1.
$$

Consequently,

$$
1\in
\operatorname{Sph}_{A^{**}}^+(E(A^{**}))\cap A.
$$

On the other hand,

$$
1\notin I(C([0,1])).
$$

Indeed, suppose that

$$
1=a+b,
\qquad
a,b\in C([0,1])_+,
\qquad
ab=0,
\qquad
\|a\|=\|b\|=1.
$$

Then, for every $t\in[0,1]$,

$$
a(t)+b(t)=1,
\qquad
a(t)b(t)=0.
$$

Thus

$$
a(t)\in\{0,1\}
\qquad(t\in[0,1]).
$$

Since $a$ is continuous and $[0,1]$ is connected, $a$ must be constant.
Hence either $a=0$ or $a=1$, contradicting

$$
\|a\|=\|b\|=1.
$$

Therefore

$$
1\notin I(C([0,1])),
$$

and so

$$
I(A)\neq
\operatorname{Sph}_{A^{**}}^+(E(A^{**}))\cap A
$$

in general.

Thus the finite-dimensional metric characterization cannot, in
general, be recovered merely by passing from $A$ to its bidual.
\end{remark}

\section{Topological properties of  $\mathcal I(A)$}

In this section we investigate several properties of $\mathcal I(A)$. In particular we address the problem of closedness and compactness of $\mathcal I(A)$. Recall that
$$
\mathcal I(A)
=
\left\{
a+b:\ a,b\in A_+,\ ab=0,\ \|a\|=\|b\|=1
\right\}.
$$
Since $\mathcal I(A)$ is in general not convex, we say that
$x\in\mathcal I(A)$ is an \emph{extreme point} of $\mathcal I(A)$, and write
$x\in\operatorname{Ext}(\mathcal I(A))$, if
$x=\frac{y+z}{2}$ with $y,z\in\mathcal I(A)$ implies $y=z\,(=x)$.
We write $\mathcal P(A)$ for the set of projections of $A$.

\begin{theorem}\label{thm:main}
Let $A$ be a unital $C^*$-algebra. Then the following assertions hold.
\begin{enumerate}
\item[(1)] If $\mathcal I(A)\neq\varnothing$, then

$$
d(A)\in\{0,1\}.
$$

More precisely,

$$
d(A)=0
\quad\Longleftrightarrow\quad
A\text{ contains a nontrivial projection}.
$$

In particular, if $A$ is projectionless, then $d(A)=1$.

\item[(2)] If $\mathcal I(A)$ is norm closed in $A$ and $B$ is a hereditary $C^*$-subalgebra of $A$, then $\mathcal I(B)$ is norm closed in $B$.

\item[(3)] If $\mathcal I(A)$ is finite and nonempty, then

$$
A\cong\mathbb C^2
\qquad\text{or}\qquad
A\cong M_2(\mathbb C).
$$

In both cases,

$$
\mathcal I(A)=\{1\}.
$$

\item[(4)] $\mathcal I(A)$ is norm compact if and only if $A$ is finite-dimensional.

\item[(5)] If $A$ is a von Neumann algebra, then $\mathcal I(A)$ is norm closed.

\item[(6)]
\[
\operatorname{Ext}\bigl(\mathcal I(A)\bigr)
=
\mathcal I(A)\cap\mathcal P(A).
\]
Equivalently, an element \(x\in\mathcal I(A)\) is an extreme point of
\(\mathcal I(A)\) if and only if \(x\) is a projection.
\end{enumerate}
\end{theorem}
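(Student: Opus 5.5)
The plan is to prove the six assertions separately, using only the definitions, Proposition~\ref{prop:sph}, and continuous functional calculus. One observation is used throughout. If $x=a+b$ with $a,b\ge0$ and $ab=0$, then $ba=(ab)^*=0$, so $a$, $b$ and $x$ generate a commutative $C^*$-algebra. Every character $\chi$ of it satisfies $\chi(a)\chi(b)=0$.

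For (1), note first that $0\le x\le1$ gives $\|1-x\|\le1$ for every $x\in S(A)_+$, so $d(A)\le1$. If $p$ is a nontrivial projection, then $1=p+(1-p)\in\mathcal I(A)$, so $d(A)=0$. Conversely, suppose $\|1-x\|<1$ for some $x=a+b\in\mathcal I(A)$. Then $x$ is invertible, and $p=ax^{-1}$ lies in the commutative algebra $C^*(a,b)$. On each character, $\chi(p)$ equals $1$ if $\chi(a)\neq0$ and $0$ otherwise, so $p$ is a projection. It is nonzero because $a\neq0$, and $p\neq1$ because $pb=0$ while $b\neq0$. Hence $d(A)<1$ forces a nontrivial projection, which gives both $d(A)\in\{0,1\}$ and the equivalence.

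For (2), I will show $\mathcal I(B)=\mathcal I(A)\cap B$. The inclusion "$\subseteq$" is clear. For the reverse, take $x=a+b\in\mathcal I(A)\cap B$. Then $0\le a,b\le x$, and heredity puts $a,b\in B$. Closedness of $\mathcal I(B)$ in $B$ then follows from closedness of $\mathcal I(A)$.

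For (6), first let $x$ be a projection in $\mathcal I(A)$. Projections are extreme points of the positive unit ball $A_{1+}$, and $\mathcal I(A)\subseteq A_{1+}$, so $x$ is extreme in $\mathcal I(A)$. Conversely, let $x=a+b\in\mathcal I(A)$ not be a projection. Then $a$ or $b$ is not a projection, say $a$, so $\sigma(a)$ meets $(0,1)$. Set $y=a^2+b$ and $z=(2a-a^2)+b$. Both $t^2$ and $2t-t^2$ map $[0,1]$ into $[0,1]$, fix $0$ and $1$, and differ on $(0,1)$. Hence $y,z\in\mathcal I(A)$, $y\neq z$, and $x=\tfrac{y+z}2$.

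For (3), a similar perturbation argument applies. Let $x=a+b\in\mathcal I(A)$. The family $a^t+b$, $t>0$, lies in $\mathcal I(A)$ and is infinite unless $a$ is a projection, and likewise for $b$. So $x=p+q$ with $p,q$ orthogonal projections. Adding any positive contraction $c\in(1-p-q)A(1-p-q)$ keeps $p+q+c$ in $\mathcal I(A)$, so this corner must be $0$ and $p+q=1$. Replacing $p$ by any element of $S(pAp)_+$ also stays in $\mathcal I(A)$, which forces $pAp=\mathbb Cp$, and similarly $qAq=\mathbb Cq$. Then $A=pAp+pAq+qAp+qAq$, and $pAq$ is at most one-dimensional. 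Consequently $A\cong\mathbb C^2$ or $A\cong M_2(\mathbb C)$, and Proposition~\ref{prop:sph} gives $\mathcal I(A)=\{1\}$ in either case.

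For (4), suppose $A$ is finite-dimensional. By Proposition~\ref{prop:sph}, $\mathcal I(A)=\operatorname{Sph}^+_A(E(A))$, which is an intersection of closed sets inside the bounded set $S(A)_+$, hence compact. For the converse with $A$ infinite-dimensional, I would start from $x=a+b\in\mathcal I(A)$ and produce a sequence in $\mathcal I(A)$ with no Cauchy subsequence. Candidates are $a+b+c_n$ with $c_n$ in a corner orthogonal to $a$ and $b$, or $f_n(a)+b$ for suitable functions $f_n$. I expect this to be the main obstacle. One difficulty is that $\mathcal I(A)=\varnothing$ would be trivially compact, so it must be handled or excluded separately. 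The other is producing infinitely many well-separated elements when $a$ and $b$ are projections in small corners; there I would use that some corner $pAp$, $(1-p)A(1-p)$ or $pA(1-p)$ must be infinite-dimensional.

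For (5), let $x_n=a_n+b_n\to x$ in a von Neumann algebra. I would show that the spectral projection $e_\varepsilon=\chi_{[1-\varepsilon,1]}(x)$ cannot be a minimal projection with $1$ an isolated eigenvalue of multiplicity one. In the remaining cases, $x$ is decomposed using Borel functional calculus: either split $e_1$ into two orthogonal nonzero projections, or split the spectrum near $1$ into two disjoint Borel pieces each accumulating at $1$. Each piece then carries a positive part of norm one, and the two parts are orthogonal. Ruling out the excluded case is the delicate point; I would do it by perturbation of isolated spectral projections, comparing $e_\varepsilon$ with the corresponding spectral projections of $x_n$.
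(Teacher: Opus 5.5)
\textbf{The infinite-dimensional half of (4) is not proved.} You list candidate sequences and obstacles but give no argument. Starting from a fixed $x=a+b\in\mathcal I(A)$ is also the wrong starting point. It presupposes $\mathcal I(A)\neq\varnothing$, which is part of what must be shown. And knowing that one of the corners $pAp$, $(1-p)A(1-p)$, $pA(1-p)$ is infinite-dimensional does not by itself produce well-separated elements of $\mathcal I(A)$. The one-parameter families from your (3), such as $p+q+tc$ with $t\in[0,1]$, are norm-continuous in $t$, so they contradict finiteness but not compactness.

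The missing ingredient is the standard fact that an infinite-dimensional $C^*$-algebra contains a self-adjoint $h$ with infinite spectrum. Take infinitely many points of $\sigma(h)$ with pairwise disjoint neighbourhoods, and continuous $f_n\geq0$ on $\sigma(h)$ supported in them with $\max f_n=1$. Functional calculus then gives mutually orthogonal $x_n=f_n(h)\in S(A)_+$. Now $y_n=x_1+x_n\in\mathcal I(A)$ for $n\geq2$, and $\|y_n-y_m\|=\|x_n-x_m\|=1$ for $n\neq m$. This settles both of your difficulties at once: $\mathcal I(A)\neq\varnothing$, and $\mathcal I(A)$ contains a $1$-separated sequence.

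\textbf{Part (5) stops at the decisive step.} Your reduction matches the paper's: if $x\notin\mathcal I(A)$, then $1$ is isolated in $\sigma(x)$ and $p=E_x(\{1\})$ is minimal. (Your ``$e_1$'' should be $E_x(\{1\})$, since $\chi_{[0,1]}(x)=1$.) But you never say which property passes from $p$ to the spectral projections of $x_n$, nor why those projections cannot have it. What is needed:
choose $r\in(0,1)$ with $\sigma(x)\cap[1-r,1]=\{1\}$. For large $n$ the Riesz projection of $x_n$ for the circle $|z-1|=r$ is $p_n=\chi_{(1-r,1]}(x_n)$, and $\|p_n-p\|\to0$. Projections at distance $<1$ are unitarily equivalent, so $p_n$ is minimal. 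On the other hand, $a_nb_n=0$ gives $h(x_n)=h(a_n)+h(b_n)$ for every bounded Borel $h$ with $h(0)=0$. Hence $p_n=\chi_{(1-r,1]}(a_n)+\chi_{(1-r,1]}(b_n)$ is a sum of two orthogonal projections, both nonzero because $\|a_n\|=\|b_n\|=1$. So $p_n$ is not minimal, a contradiction.

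\textbf{The other parts are sound.} Parts (1), (2) and (6) follow the paper. Your $a^2$ and $2a-a^2$ are the paper's $a_\pm$ with $t=1$, and you quote extremality of projections in $A_{1+}$ where the paper reproves it.

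Your (3) is correct by a different route. The paper first proves finite-dimensionality from an element with infinite spectrum, then does a case analysis on $\bigoplus_j M_{n_j}(\mathbb C)$. You argue intrinsically through corners. This avoids both the infinite-spectrum lemma and the structure theorem, at the cost of needing a point of $\mathcal I(A)$ to start from; that point is available in (3) by hypothesis. The bound $\dim pAq\leq1$ deserves its one line. For $0\neq u\in pAq$ one has $u^*u=\|u\|^2q$, so each $v\in pAq$ equals $(vu^*)u/\|u\|^2\in\mathbb Cu$, because $vu^*\in pAp=\mathbb Cp$.

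For the finite-dimensional half of (4), closedness of $\operatorname{Sph}^+_A(E(A))$ via Proposition~\ref{prop:sph} works as well as the paper's compactness of $\mathcal D(A)$.
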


\begin{proof}
\textbf{(1).}
Let $x\in\mathcal I(A)$. Then

$$
x=a+b,
\qquad
a,b\in A_+,
\qquad
ab=0,
\qquad
\|a\|=\|b\|=1.
$$

Since $a,b\geq0$ and $ab=0$, we have

$$
0\leq x\leq1
$$

and

$$
\|x\|=1.
$$

Suppose that $x$ is invertible. Since

$$
ax=a(a+b)=a^2
$$

and

$$
xa=(a+b)a=a^2,
$$

the elements $a$ and $x$ commute. Put

$$
p=ax^{-1}.
$$

Then $p=p^*$ and

$$
p^2=a^2x^{-2}
=axx^{-2}
=ax^{-1}
=p.
$$

Thus $p$ is a projection. Moreover, $p\neq0$ since $a\neq0$, and $p\neq1$ since $b\neq0$. Hence $A$ contains a nontrivial projection.

Consequently, if $A$ is projectionless, no element of $\mathcal I(A)$ is invertible. Thus, for every $x\in\mathcal I(A)$,

$$
0\in\sigma(x).
$$

Since $0\leq x\leq1$ and $\|x\|=1$, we also have

$$
1\in\sigma(x).
$$

Therefore

$$
\|1-x\|
=
\max_{\lambda\in\sigma(x)}|1-\lambda|
=1.
$$

Hence

$$
d(A)=1.
$$

Conversely, suppose that $A$ contains a nontrivial projection $p$. Then

$$
p,\ 1-p\geq0,\qquad
p(1-p)=0,
\qquad
\|p\|=\|1-p\|=1.
$$

Thus

$$
1=p+(1-p)\in\mathcal I(A),
$$

and consequently

$$
d(A)=0.
$$

This proves (1).

\medskip

\textbf{(2).}
Let $(x_n)$ be a sequence in $\mathcal I(B)$ converging in norm to some $x\in B$. Since

$$
\mathcal I(B)\subseteq\mathcal I(A)
$$

and $\mathcal I(A)$ is norm closed, we have

$$
x\in\mathcal I(A).
$$

Hence there exist $a,b\in A_+$ such that

$$
x=a+b,\qquad
ab=0,\qquad
\|a\|=\|b\|=1.
$$

Since

$$
0\leq a\leq x
\qquad\text{and}\qquad
0\leq b\leq x,
$$

and since $B$ is hereditary and $x\in B$, it follows that

$$
a,b\in B.
$$

Therefore

$$
x=a+b\in\mathcal I(B).
$$

Thus $\mathcal I(B)$ is norm closed in $B$.

\medskip

\textbf{(3).}
Assume that $\mathcal I(A)$ is finite and nonempty.

We first prove that $A$ is finite-dimensional. Suppose, to the contrary, that $A$ is infinite-dimensional. Then there exists a self-adjoint element

$$
h\in A_{\mathrm{sa}}
$$

with infinite spectrum. Choose three distinct points

$$
\lambda_1,\lambda_2,\lambda_3\in\sigma(h)
$$

and pairwise disjoint open neighborhoods

$$
U_1,U_2,U_3
$$

of these points. By continuous functional calculus, choose functions

$$
f_j\in C(\sigma(h))_+,
\qquad
\|f_j\|=1,
\qquad
\operatorname{supp}(f_j)\subseteq U_j
$$

for $j=1,2,3$. Put

$$
p=f_1(h),\qquad q=f_2(h),\qquad r=f_3(h).
$$

Then $p,q,r$ are nonzero positive elements of norm one and

$$
pq=pr=qr=0.
$$

For $t\in[0,1]$, define

$$
a_t=p+tq,
\qquad
b=r.
$$

Then

$$
a_t,b\geq0,\qquad
a_tb=0,\qquad
\|a_t\|=\|b\|=1.
$$

Hence

$$
p+tq+r\in\mathcal I(A)
$$

for every $t\in[0,1]$. These elements are distinct for distinct $t$, contradicting the finiteness of $\mathcal I(A)$.

Thus $A$ is finite-dimensional. Hence

$$
A\cong\bigoplus_{j=1}^rM_{n_j}(\mathbb C).
$$

Suppose first that $r\geq3$. Choose rank-one projections

$$
p_1,p_2,p_3
$$

in three distinct summands. For $t\in[0,1]$, put

$$
a_t=p_1+t p_3,
\qquad
b=p_2.
$$

Then

$$
a_t,b\geq0,\qquad
a_tb=0,\qquad
\|a_t\|=\|b\|=1,
$$

so

$$
p_1+t p_3+p_2\in\mathcal I(A).
$$

Again $\mathcal I(A)$ is infinite. Therefore

$$
r\leq2.
$$

Suppose that $r=2$. If one of the summands has dimension at least $2$, say

$$
n_1\geq2,
$$

choose orthogonal rank-one projections $p,q$ in the first summand and let $e$ be the identity of the second summand. Then

$$
a_t=p+t e,
\qquad
b=q
$$

satisfy

$$
a_t,b\geq0,\qquad
a_tb=0,\qquad
\|a_t\|=\|b\|=1.
$$

Thus

$$
p+t e+q\in\mathcal I(A)
$$

for every $t\in[0,1]$, a contradiction. Hence

$$
n_1=n_2=1,
$$

and therefore

$$
A\cong\mathbb C^2.
$$

Finally, suppose that $r=1$. Then

$$
A\cong M_n(\mathbb C).
$$

If $n\geq3$, choose three mutually orthogonal rank-one projections

$$
p,q,s.
$$

Then

$$
a_t=p+tq,
\qquad
b=s
$$

again gives infinitely many distinct elements

$$
p+tq+s\in\mathcal I(A).
$$

Hence $n\leq2$. Since $\mathcal I(A)\neq\varnothing$, we cannot have $A\cong\mathbb C$, and therefore

$$
A\cong M_2(\mathbb C).
$$

It remains to determine $\mathcal I(A)$ in these two cases.

If $A=\mathbb C^2$, the only two nonzero positive elements of norm one which are orthogonal are the two coordinate projections. Hence

$$
\mathcal I(\mathbb C^2)=\{1\}.
$$

If $A=M_2(\mathbb C)$ and

$$
x=a+b\in\mathcal I(A),
$$

then $a$ and $b$ are nonzero positive orthogonal elements. Their support projections are nonzero and orthogonal. Since the ambient space is two-dimensional, both supports are one-dimensional. Therefore $a$ and $b$ are positive operators supported on one-dimensional subspaces and having norm one; hence they are rank-one projections. Their supports are complementary, so

$$
a+b=1.
$$

Thus

$$
\mathcal I(M_2(\mathbb C))=\{1\}.
$$

This proves (3).

\medskip

\textbf{(4).}
Suppose first that $A$ is finite-dimensional. Consider

$$
\mathcal D
=
\left\{
(a,b)\in A_+\times A_+:
ab=0,\ \|a\|=\|b\|=1
\right\}.
$$

The set $\mathcal D$ is closed and bounded in the finite-dimensional space $A\times A$, hence compact. The map

$$
\mathcal D\longrightarrow A,
\qquad
(a,b)\longmapsto a+b
$$

is continuous, and its image is precisely $\mathcal I(A)$. Hence $\mathcal I(A)$ is compact.

Conversely, suppose that $A$ is infinite-dimensional. Choose

$$
h\in A_{\mathrm{sa}}
$$

with infinite spectrum. Select infinitely many distinct points of $\sigma(h)$ and pairwise disjoint open neighborhoods of these points. By continuous functional calculus, there exist positive elements

$$
x_1,x_2,\ldots\in A
$$

such that

$$
\|x_n\|=1
\qquad\text{and}\qquad
x_nx_m=0
\quad(n\neq m).
$$

For $n\geq2$, put

$$
y_n=x_1+x_n.
$$

Then

$$
y_n\in\mathcal I(A).
$$

For $n\neq m$,

$$
y_n-y_m=x_n-x_m.
$$

Since $x_n,x_m\geq0$ are orthogonal and have norm one,

$$
\|x_n-x_m\|=1.
$$

Therefore

$$
\|y_n-y_m\|=1
\qquad(n\neq m).
$$

Thus $(y_n)$ has no norm-convergent subsequence, and $\mathcal I(A)$ cannot be norm compact.

Hence $\mathcal I(A)$ is norm compact if and only if $A$ is finite-dimensional.

\medskip

\textbf{(5).}
Assume that $A$ is a von Neumann algebra.

We first establish the following claim.

\medskip

\emph{Claim.}
Let $x\in A_+$ with $\|x\|=1$. Then
\[
x\notin\mathcal I(A)
\]
if and only if $1$ is an isolated point of $\sigma(x)$ and the spectral
projection
\[
E_x(\{1\})
\]
is a minimal projection.

\medskip

Suppose first that $1$ is not isolated in $\sigma(x)$. Choose pairwise
disjoint open intervals
\[
J_1,J_2,\ldots\subset(0,1)
\]
such that
\[
J_n\cap\sigma(x)\neq\varnothing
\]
for every $n$, and such that the intervals accumulate at $1$. Put
\[
e_n=E_x(J_n).
\]
Since the support of the spectral measure $E_x$ is precisely
$\sigma(x)$, every open set which meets $\sigma(x)$ has nonzero spectral
projection. Hence
\[
e_n\neq0
\qquad(n\geq1).
\]
Moreover, since the intervals $J_n$ are pairwise disjoint, the
projections $e_n$ are mutually orthogonal.

Define
\[
P=\sum_{n\ {\rm even}}e_n.
\]
The sum is understood in the strong operator topology. Indeed, the
finite partial sums
\[
P_N=\sum_{\substack{n\leq N\\ n\ {\rm even}}}e_n
\]
are increasing projections satisfying $0\leq P_N\leq1$. Since $A$ is a
von Neumann algebra, $P_N$ converges strongly to a projection
$P\in A$. Similarly,
\[
Q=\sum_{n\ {\rm odd}}e_n
\]
converges strongly to a projection $Q\in A$, and $P$ and $Q$ are
orthogonal.

We claim that
\[
\|xP\|=1
\qquad\text{and}\qquad
\|xQ\|=1.
\]
Indeed, given $\varepsilon>0$, since the intervals $J_n$ accumulate at
$1$, there is an even $n$ such that
\[
J_n\subset(1-\varepsilon,1).
\]
Since $e_n\neq0$ and $e_n=E_x(J_n)$, the restriction of $x$ to
$e_nH$ has norm at least $1-\varepsilon$. As $e_n\leq P$, it follows that
\[
\|xP\|\geq 1-\varepsilon.
\]
Since $\|xP\|\leq\|x\|=1$, we obtain $\|xP\|=1$. The same argument,
using an odd $n$, gives $\|xQ\|=1$.

Since $Q\leq1-P$, we also have
\[
\|x(1-P)\|\geq\|xQ\|=1.
\]
On the other hand,
\[
\|x(1-P)\|\leq\|x\|=1.
\]
Therefore
\[
\|x(1-P)\|=1.
\]

Now $P$ is a spectral projection of $x$, so $P$ commutes with $x$.
Consequently,
\[
x=xP+x(1-P),
\]
where
\[
xP\geq0,\qquad x(1-P)\geq0,
\]
and
\[
xP\,x(1-P)=0.
\]
Both summands have norm $1$. Hence
\[
x\in\mathcal I(A).
\]
Thus
\[
x\notin\mathcal I(A)
\quad\Longrightarrow\quad
1\text{ is isolated in }\sigma(x).
\]

Now suppose that $1$ is isolated in $\sigma(x)$ and put
\[
p=E_x(\{1\}).
\]
If $p$ is not minimal, choose a nonzero proper projection
\[
0<q<p.
\]
Since $q\leq p=E_x(\{1\})$, we have
\[
xq=q.
\]
Moreover, $q$ commutes with $x$. Define
\[
a=q,
\qquad
b=x(1-q).
\]
Then
\[
a,b\geq0,
\qquad
ab=0,
\qquad
\|a\|=1.
\]
Since $p-q\neq0$ and $x$ acts as the identity on $pH$, we have
\[
\|b\|=1.
\]
Finally,
\[
a+b=q+x(1-q)=x.
\]
Therefore
\[
x\in\mathcal I(A).
\]
It follows that
\[
x\notin\mathcal I(A)
\quad\Longrightarrow\quad
E_x(\{1\})\text{ is minimal}.
\]

Conversely, suppose that
\[
x=a+b\in\mathcal I(A),
\]
where
\[
a,b\geq0,\qquad
ab=0,\qquad
\|a\|=\|b\|=1.
\]
Then $a$ and $b$ commute and
\[
x=a+b.
\]
Suppose that $1$ is isolated in $\sigma(x)$. Since
\[
\sigma(a),\sigma(b)\subset[0,1]
\]
and
\[
\|a\|=\|b\|=1,
\]
we have
\[
1\in\sigma(a)\cap\sigma(b).
\]
Let $C=C^*(a,b)$, a commutative $C^*$-algebra. If $\chi$ is a character
of $C$, then $\chi(a)\chi(b)=\chi(ab)=0$, so at most one of
$\chi(a),\chi(b)$ is nonzero and $\chi(x)$ equals that one. Hence
$\sigma(a)\setminus\{0\}\subseteq\sigma(x)$ and
$\sigma(b)\setminus\{0\}\subseteq\sigma(x)$. Since $1$ is isolated in
$\sigma(x)$, it is isolated in $\sigma(a)$ and in $\sigma(b)$. Choose
a continuous function $h$ on $[0,1]$ with $h(0)=0$, $h(1)=1$ and
$h=0$ on $\sigma(x)\setminus\{1\}$. Then $h(a)=E_a(\{1\})$,
$h(b)=E_b(\{1\})$ and $h(x)=E_x(\{1\})$; moreover, because $ab=0$ we
have $x^k=a^k+b^k$ for all $k\geq1$, hence
$h(x)=h(a)+h(b)$. Therefore
\[
E_x(\{1\})=E_a(\{1\})+E_b(\{1\}).
\]
Since $1\in\sigma(a)\cap\sigma(b)$ is an isolated point of each spectrum,
\[
E_a(\{1\})\neq0,
\qquad
E_b(\{1\})\neq0.
\]
Both projections lie in $C$ and are of the form $f(a)$, $g(b)$ with
$f(0)=g(0)=0$; since $ab=0$ their product vanishes, so they are
orthogonal.
Thus $E_x(\{1\})$ is the sum of two nonzero orthogonal projections and
therefore is not minimal.

Hence, if $1$ is isolated in $\sigma(x)$ and
$E_x(\{1\})$ is minimal, then
\[
x\notin\mathcal I(A).
\]
This proves the claim.

\medskip

We now prove that $\mathcal I(A)$ is norm closed. Let
\[
x_n\in\mathcal I(A)
\]
and suppose that
\[
x_n\longrightarrow x
\]
in norm. Since $\mathcal I(A)\subseteq A_+$ and the positive cone is
norm closed, we have
\[
x\in A_+.
\]
Moreover,
\[
\|x\|=\lim_n\|x_n\|=1.
\]

Suppose, toward a contradiction, that
\[
x\notin\mathcal I(A).
\]
By the claim, $1$ is isolated in $\sigma(x)$ and
\[
p=E_x(\{1\})
\]
is a minimal projection.

Choose $r$ with $0<r<1$ such that
\[
\sigma(x)\cap[1-r,1]=\{1\}.
\]
Let
\[
\Gamma=\{z\in\mathbb C:|z-1|=r\}.
\]
Its only real points are $1\pm r$, and $1-r\notin\sigma(x)\subseteq[0,1]$, so $\Gamma$ does not
meet $\sigma(x)$. Then $\Gamma\subset\rho(x)$ and
\[
p=
\frac{1}{2\pi i}
\int_\Gamma (z-x)^{-1}\,dz.
\]
by the Riesz projection formula (see, e.g.,
\cite[Theorem~III.6.17, p.~178]{KATO}).

Since $x_n\to x$ in norm, for all sufficiently large $n$ we have
$\Gamma\subset\rho(x_n)$, and the corresponding Riesz projections
\[
p_n=
\frac{1}{2\pi i}
\int_\Gamma (z-x_n)^{-1}\,dz
\]
satisfy
\[
\|p_n-p\|\longrightarrow0.
\]
In particular, for sufficiently large $n$,
\[
\|p_n-p\|<1.
\]
It follows that $p_n$ and $p$ are unitarily equivalent by a unitary of $A$ (a standard fact for two projections of a von Neumann algebra at distance less than one). Since $p$ is
minimal, $p_n$ is also minimal.

On the other hand, write
\[
x_n=a_n+b_n,
\]
where
\[
a_n,b_n\geq0,\qquad
a_nb_n=0,\qquad
\|a_n\|=\|b_n\|=1.
\]
Choose $n$ sufficiently large so that the above construction of $p_n$
applies. Since
\[
\|a_n\|=\|b_n\|=1,
\]
we have
\[
E_{a_n}((1-r,1])\neq0
\qquad\text{and}\qquad
E_{b_n}((1-r,1])\neq0.
\]
Because $a_nb_n=0$, these two spectral projections are orthogonal.

Since $\sigma(x_n)\subseteq[0,1]$ and $0<r<1$, the Riesz projection $p_n$
is the spectral projection $E_{x_n}((1-r,1])$. Because $a_nb_n=0$, we have
$x_n^k=a_n^k+b_n^k$ for all $k\geq1$; hence
$h(x_n)=h(a_n)+h(b_n)$ for every continuous $h$ on $[0,1]$ with
$h(0)=0$, and then, by the usual monotone class argument in the von
Neumann algebra $A$, for every bounded Borel function $h$ with $h(0)=0$.
Applying this to $h=\chi_{(1-r,1]}$ gives
\[
p_n
=
E_{a_n}((1-r,1])
+
E_{b_n}((1-r,1]).
\]
Thus $p_n$ is the sum of two nonzero orthogonal projections. In
particular, $p_n$ is not minimal, contradicting the minimality of
$p_n$ obtained above.

Therefore
\[
x\in\mathcal I(A).
\]
Hence $\mathcal I(A)$ is norm closed.

\noindent
\textbf{Proof of (6).}
We prove the two inclusions separately.

\medskip

\noindent
\textbf{First, suppose that \(x\in\operatorname{Ext}(\mathcal I(A))\).}
By the definition of \(\mathcal I(A)\), there exist \(a,b\in A_+\) such that
\[
x=a+b,\qquad ab=0,\qquad \|a\|=\|b\|=1.
\]
Since \(a\) and \(b\) are positive and orthogonal, we have
\[
x^2=(a+b)^2=a^2+b^2.
\]
Consequently,
\[
x-x^2=(a-a^2)+(b-b^2).
\]
Suppose, towards a contradiction, that \(x\) is not a projection. Then
\(x-x^2\neq0\), and hence at least one of \(a-a^2\) and \(b-b^2\)
is nonzero. Without loss of generality, assume that
\[
a-a^2\neq0.
\]

Choose \(t\) with \(0<t<1\), and put
\[
a_\pm=a\pm t(a-a^2).
\]
We claim that \(a_\pm\in A_+\) and \(\|a_\pm\|=1\).

Indeed, since \(a\geq0\) and \(\|a\|=1\),
\[
\sigma(a)\subseteq[0,1]
\qquad\text{and}\qquad
1\in\sigma(a).
\]
By continuous functional calculus,
\[
a_\pm=f_\pm(a),
\qquad
f_\pm(s)=s\pm ts(1-s).
\]
For \(0\leq s\leq1\),
\[
f_+(s)=s+ts(1-s)\geq0
\]
and
\[
1-f_+(s)=(1-s)(1-ts)\geq0.
\]
Thus
\[
0\leq f_+(s)\leq1.
\]
Similarly,
\[
f_-(s)=s-ts(1-s)=s(1-t+ts)\geq0
\]
and
\[
1-f_-(s)=(1-s)(1+ts)\geq0.
\]
Thus
\[
0\leq f_-(s)\leq1.
\]
Therefore
\[
0\leq a_\pm\leq1.
\]
Moreover,
\[
f_\pm(1)=1,
\]
and since \(1\in\sigma(a)\), the spectral mapping theorem gives
\[
1\in\sigma(a_\pm).
\]
Hence
\[
\|a_\pm\|=1.
\]

Since \(a_\pm\in C^*(a)\) and \(ab=0\), we have
\[
a_\pm b=ba_\pm=0.
\]
Therefore
\[
x_\pm:=a_\pm+b\in\mathcal I(A).
\]
Furthermore,
\[
\frac{x_++x_-}{2}
=
\frac{a_++a_-}{2}+b
=a+b=x.
\]
On the other hand,
\[
x_+-x_-=2t(a-a^2)\neq0,
\]
so \(x_+\neq x_-\). This contradicts the assumption that \(x\) is an
extreme point of \(\mathcal I(A)\). Thus \(x\) must be a projection.
Consequently,
\[
\operatorname{Ext}(\mathcal I(A))
\subseteq
\mathcal I(A)\cap\mathcal P(A).
\]

\medskip

\noindent
\textbf{Conversely, suppose that \(p\in\mathcal I(A)\cap\mathcal P(A)\).}
We prove that \(p\) is an extreme point of \(\mathcal I(A)\).

Suppose that
\[
p=\frac{x+y}{2},
\qquad x,y\in\mathcal I(A).
\]
Every element of \(\mathcal I(A)\) is positive and has norm \(1\).
Hence
\[
x,y\geq0,
\qquad
\|x\|=\|y\|=1.
\]
Since
\[
x+y=2p,
\]
we have
\[
0\leq x\leq2p,
\qquad
0\leq y\leq2p.
\]

We first show that \(x,y\in pAp\). Since \(0\leq x\leq2p\),
multiplication on both sides by \(1-p\) gives
\[
0\leq(1-p)x(1-p)
\leq2(1-p)p(1-p)=0.
\]
Hence
\[
(1-p)x(1-p)=0.
\]
Since \(x\geq0\), this implies
\[
x^{1/2}(1-p)=0,
\]
and therefore
\[
x(1-p)=(1-p)x=0.
\]
Thus
\[
x=pxp\in pAp.
\]
Similarly,
\[
y=pyp\in pAp.
\]

Now \(pAp\) is a unital \(C^*\)-algebra whose identity is \(p\).
Since \(x\geq0\), \(x\in pAp\), and
\[
\|x\|=1,
\]
the spectrum of \(x\), computed in \(pAp\), is contained in
\([0,1]\). Therefore
\[
0\leq x\leq p.
\]
Likewise,
\[
0\leq y\leq p.
\]

But
\[
x+y=2p.
\]
Since \(x\leq p\) and \(y\leq p\), we have
\[
2p-x-y\geq0.
\]
The left-hand side is zero, so equality must hold in both inequalities:
\[
x=p,\qquad y=p.
\]
Thus every representation
\[
p=\frac{x+y}{2},
\qquad x,y\in\mathcal I(A),
\]
is trivial. Hence \(p\) is an extreme point of \(\mathcal I(A)\).

Therefore,
\[
\mathcal I(A)\cap\mathcal P(A)
\subseteq
\operatorname{Ext}(\mathcal I(A)).
\]

Combining the two inclusions, we obtain
\[
\boxed{
\operatorname{Ext}(\mathcal I(A))
=
\mathcal I(A)\cap\mathcal P(A).
}
\]
This completes the proof of (6).
\end{proof}

\begin{remark}
The norm closedness in part {\rm (5)} does not, in general, imply weak-operator closedness.

Indeed, let $A$ be an infinite-dimensional von Neumann algebra. Then $A$ contains a sequence

$$
e_1,e_2,\ldots
$$

of nonzero pairwise orthogonal projections. In a faithful representation $A\subseteq B(H)$, these projections converge strongly, and hence weakly, to zero. Indeed, for every $\xi\in H$,

$$
\sum_{n=1}^{\infty}\|e_n\xi\|^2
\leq\|\xi\|^2,
$$

so

$$
e_n\xrightarrow{\mathrm{SOT}}0.
$$

Now put

$$
x_n=e_{2n-1}+e_{2n}.
$$

Then

$$
x_n\in\mathcal I(A)
$$

for every $n$, while

$$
x_n\xrightarrow{\mathrm{SOT}}0
$$

and hence

$$
x_n\xrightarrow{\mathrm{WOT}}0.
$$

Since

$$
0\notin\mathcal I(A),
$$

we conclude that $\mathcal I(A)$ is not weak-operator closed.

Thus, for an infinite-dimensional von Neumann algebra, $\mathcal I(A)$ is norm closed but need not be WOT closed.
\end{remark}

\begin{remark}
Part~(6) reveals a particularly significant geometric feature of the set
\(\mathcal I(A)\).  The condition of being a projection, which is an
algebraic condition, is recovered here purely from the convex geometry of
\(\mathcal I(A)\): the projections are exactly the points of
\(\mathcal I(A)\) which cannot be expressed as non-trivial midpoints of
two elements of \(\mathcal I(A)\).  Thus

$$
\operatorname{Ext}(\mathcal I(A))
=
\mathcal I(A)\cap\mathcal P(A)
$$

may be viewed as a geometric manifestation of idempotency.

This observation is suggestive in connection with the
Kadison--Kaplansky idempotent conjecture, which predicts that the reduced
group $C^*$-algebra $C^*_{\mathrm{red}}G$ of a torsion-free discrete group
$G$ has no projections other than $0$ and $1$. By Theorem~\ref{thm:main}(1),
the absence of nontrivial projections is detected by the invariant $d(A)$
(when $\mathcal I(A)\neq\varnothing$), and, by part~(6), projections are
exactly the extreme points of $\mathcal I(A)$. We do not pursue this
connection here.

For
\(x=a+b\in\mathcal I(A)\), with \(a,b\geq0\), \(ab=0\), and
\(\|a\|=\|b\|=1\), one has

$$
x^2=a^2+b^2
$$

and hence

$$
x-x^2=(a-a^2)+(b-b^2).
$$

Consequently, the failure of \(x\) to be idempotent is reflected in a
non-trivial ``idempotent defect'' inside the very decomposition defining
\(\mathcal I(A)\).  Part~(6) shows that this algebraic defect has a
precise convex-geometric counterpart: the non-idempotent elements of
\(\mathcal I(A)\) admit non-trivial convex decompositions, whereas the
idempotent elements are exactly the extreme ones.

In this sense, \(\mathcal I(A)\) provides a natural geometric arena in
which the idempotent phenomenon can be investigated.  The result also
recalls, in a particularly suggestive way, the classical characterization
of projections as the extreme points of the positive part of the unit
ball of a \(C^*\)-algebra.  Here the same geometric principle survives
after imposing the additional orthogonality and normalization conditions
defining \(\mathcal I(A)\).

This perspective suggests that the geometry of \(\mathcal I(A)\) may
contain information extending beyond the mere identification of its
extreme points.  In particular, it becomes natural to ask how symmetries
of \(A\), and especially continuous families of automorphisms, are
reflected in the topology and geometry of \(\mathcal I(A)\).  This leads
to a further direction of investigation concerning the action of
one-parameter groups of outer automorphisms on \(\mathcal I(A)\).
\end{remark}

The  next  theorem establishes closedness of  $\mathcal I(A)$  for  certain  commutative  $C^*$ algebra $A$.

\begin{theorem}\label{thm:CX}
Let $X$ be  a     locally  path  connected  compact Hausdorff space. Then $\mathcal I(C(X))$ is  a closed  set.
\end{theorem}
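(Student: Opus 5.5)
The plan is to reduce membership in $\mathcal I(C(X))$ to a topological condition on the open set where the function is positive, and then argue by contradiction along a convergent sequence. For $f\in S(C(X))_+$ put
\[
U_f=\{t\in X: f(t)>0\},\qquad M_f=f^{-1}(1).
\]

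\textbf{Step 1: a criterion for membership.} I would first show that $f\in\mathcal I(C(X))$ if and only if $U_f=V\cup W$ with $V,W$ disjoint open sets such that $M_f\cap V\neq\varnothing\neq M_f\cap W$.
\begin{enumerate}
\item[(a)] Given such $V,W$, define $a=f\chi_V$ and $b=f\chi_W$. Both are continuous: on the open set $U_f$ they are locally equal to $f$ or to $0$. At a point $t\notin U_f$ we have $f(t)=0$, and $0\leq a,b\leq f$ there, so continuity follows from that of $f$. Then $ab=0$, $a+b=f$, and $\|a\|=\|b\|=1$ because $V$ and $W$ both meet $M_f$.
\item[(b)] Conversely, if $f=a+b$ with $ab=0$, take $V=\{a>0\}$ and $W=\{b>0\}$. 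These are disjoint open sets whose union is $U_f$, and each contains a point where $a=1$ (resp.\ $b=1$), hence a point of $M_f$.
\end{enumerate}
Since $X$ is locally path connected, the components of the open set $U_f$ are open and path connected. If $M_f$ meets two different components, let $V$ be one of them and $W$ the union of the others; by (a), $f\in\mathcal I(C(X))$. Hence if $f\notin\mathcal I(C(X))$, any two points of $M_f$ can be joined by a path inside $U_f$.

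\textbf{Step 2: closedness.} Let $f_n\in\mathcal I(C(X))$ with $f_n\to f$; then $f\geq0$ and $\|f\|=1$. Suppose, toward a contradiction, that $f\notin\mathcal I(C(X))$. Write $f_n=a_n+b_n$ and choose points $x_n,y_n$ with $a_n(x_n)=1$ and $b_n(y_n)=1$, so that $f_n(x_n)=f_n(y_n)=1$. By compactness, pass to a subnet with $x_n\to x$ and $y_n\to y$; then $f(x)=f(y)=1$, i.e.\ $x,y\in M_f$. By Step 1 there is a path $\gamma$ from $x$ to $y$ in $U_f$, and by compactness $m=\min_\gamma f>0$. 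Using local path connectedness, choose path connected open neighbourhoods $N_x,N_y$ of $x,y$ contained in $\{f>1/2\}$. Set $\varepsilon=\min(m,1/2)$. For large $n$ we have $\|f_n-f\|<\varepsilon$, $x_n\in N_x$ and $y_n\in N_y$, so $f_n>0$ on the path connected set $N_x\cup\gamma\cup N_y$, which contains both $x_n$ and $y_n$. But $\{f_n>0\}$ is the disjoint union of the open sets $\{a_n>0\}\ni x_n$ and $\{b_n>0\}\ni y_n$. A path from $x_n$ to $y_n$ inside this set is impossible, which is the desired contradiction. Hence $f\in\mathcal I(C(X))$.

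\textbf{Main obstacle.} The delicate point is Step 1's converse reduction: passing from ``$M_f$ is not split by any clopen partition of $U_f$'' to path connectedness of the relevant piece. This is exactly where local path connectedness is used, namely that components of open sets are open and path connected. A second point needing care is choosing the neighbourhoods $N_x,N_y$ so that the approximating points $x_n,y_n$ are connected to $\gamma$ by paths on which $f_n$ stays positive. The remaining verifications are routine.
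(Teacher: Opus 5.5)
Your proof is correct. It has the same overall architecture as the paper's: first a membership criterion in terms of the open set $\{f>0\}$, then a contradiction argument. There, a connected set on which $f$ is bounded away from $0$ is shown to contain two peak points of some $f_n$, and these points must lie in different pieces of a separation of $\{f_n>0\}$.

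The differences are in execution.

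\textbf{The criterion.} Your version (a splitting $U_f=V\sqcup W$ into disjoint open sets both meeting $M_f$) is valid on any compact Hausdorff space, and your proof of it is cleaner than the paper's. The paper divides by $f$ in one direction and uses closures and the pasting lemma in the other. In your argument, local path connectedness enters only when you pass to components. Also, in Step 2 you need only the easy direction (b) for the approximants $f_n$, whereas the paper applies the full component criterion to each $f_n$.

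\textbf{How compactness is used in Step 2.} This is the main divergence. The paper never chooses limit points. It takes $\lambda$ with $\max_{X\setminus C}f<\lambda<1$ and covers the compact superlevel set $K=\{f\ge\lambda\}\subseteq C$ by finitely many path connected neighbourhoods, each joined by a path to a base point. This produces one connected set $E\supseteq K$ with $f\ge\delta$ on $\overline E$, which works for all large $n$ at once and uses only sequences. You instead take cluster points $x,y\in M_f$ of the peak points, so one path and two neighbourhoods suffice. Your route is shorter. However, since $X$ need not be first countable, it genuinely needs subnets (or cluster points) rather than subsequences. Your phrase ``for large $n$'' should therefore be read as ``for some index far enough along the subnet'', which is all the contradiction requires.
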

\begin{proof}
First we prove that a positive function $f\in C(X)$ of norm one belongs to $\mathcal I(C(X))$ if and only if $f^{-1}(\{1\})$ is not contained in any connected component of $X\setminus f^{-1}(\{0\})$. Let $f\in\mathcal I(C(X))$. Assume to the contrary that $f^{-1}(\{1\})$ is contained in a connected component $U$ of $X\setminus f^{-1}(\{0\})$. Write $f=g_1+g_2$, where $g_1,g_2$ are nonnegative continuous functions with $\|g_1\|=\|g_2\|=1$ and $g_1g_2=0$. On $U$ we have
\[
1=\frac{g_1(x)}{f(x)}+\frac{g_2(x)}{f(x)}.
\]
Since $g_1g_2=0$, each of the continuous functions $g_1/f$ and $g_2/f$ takes only the values $0$ and $1$. Hence they are constant on the connected set $U$, and exactly one of them is identically $1$ on $U$, while the other is identically $0$. Since $\|g_1\|=\|g_2\|=1$ and $X$ is compact, each $g_i$ assumes the value $1$. The two sets $g_1^{-1}(\{1\})$ and $g_2^{-1}(\{1\})$ are therefore nonempty, and they are contained in different connected components of $X\setminus f^{-1}(\{0\})$. This contradicts the assumption that $f^{-1}(\{1\})=g_1^{-1}(\{1\})\sqcup g_2^{-1}(\{1\})$ is contained in $U$.  In particular, if $X$ is connected, then a non-vanishing continuous function does not belong to $\mathcal I(C(X))$.

Conversely, assume that $f^{-1}(\{1\})$ is not contained in a connected component of $X\setminus f^{-1}(\{0\})$. Choose $a,b\in X$ with $f(a)=f(b)=1$ which belong to distinct connected components of $X\setminus f^{-1}(\{0\})$. Since $X$ is locally connected, the open set $X\setminus f^{-1}(\{0\})$ is the disjoint union of its open connected components. Let $W_1$ be the connected component containing $a$, and let $W_2$ be the union of all the remaining components. Then $W_1$ and $W_2$ are disjoint open sets whose union is $X\setminus f^{-1}(\{0\})$. Define
\[
g_1(x)=\begin{cases}f(x),&x\in\overline{W_1},\\0,&x\notin\overline{W_1},\end{cases}
\qquad
 g_2(x)=\begin{cases}f(x),&x\in\overline{W_2},\\0,&x\notin\overline{W_2}.\end{cases}
\]
Since $W_2$ is open and disjoint from $W_1$, we have $\overline{W_1}\subseteq W_1\cup f^{-1}(\{0\})$, and similarly $\overline{W_2}\subseteq W_2\cup f^{-1}(\{0\})$; hence $\overline{W_1}\cap\overline{W_2}\subseteq f^{-1}(\{0\})$. The function $g_1$ equals $f$ on the closed set $\overline{W_1}$ and $0$ on the closed set $X\setminus W_1$, and these two definitions agree on the intersection $\overline{W_1}\setminus W_1\subseteq f^{-1}(\{0\})$; so $g_1$ is continuous by the pasting lemma, and likewise $g_2$. Moreover, $g_1g_2=0$, $f=g_1+g_2$, and $\|g_1\|=\|g_2\|=1$. Thus $f\in\mathcal I(C(X))$.

It remains to prove that $\mathcal I(C(X))$ is closed under uniform convergence. Let $(f_n)$ be a sequence in $\mathcal I(C(X))$ converging uniformly to $f$. Then $f\geq0$ and $\|f\|=1$. By the characterization just proved, for every $n$ there are points $a_n,b_n\in X$ with
\[
f_n(a_n)=f_n(b_n)=1,
\]
lying in distinct connected components of $X\setminus f_n^{-1}(\{0\})$. Put $\varepsilon_n=\|f_n-f\|_\infty\to0$.

Suppose, toward a contradiction, that $f\notin\mathcal I(C(X))$. Then $M=f^{-1}(\{1\})\neq\varnothing$ is contained in a single connected component $C$ of the open set $U=X\setminus f^{-1}(\{0\})$. Since $X$ is locally path connected, $C$ is open and path connected. The set $X\setminus C$ is closed, hence compact, and $f<1$ on it; put $H=\max_{X\setminus C}f<1$ (and $H=0$ if $C=X$), and choose $\lambda$ with $H<\lambda<1$. Then
\[
K=\{x\in X:f(x)\geq\lambda\}
\]
is compact and $K\subseteq C$. Fix $x_0\in M$. For each $x\in K$ choose a path $\gamma_x:[0,1]\to C$ joining $x_0$ to $x$. Since $X$ is compact Hausdorff, hence regular, and locally path connected, we can choose a path connected open neighbourhood $V_x$ of $x$ with $\overline{V_x}\subseteq C$. The family $(V_x)_{x\in K}$ covers the compact set $K$, so finitely many, say $V_{x_1},\ldots,V_{x_m}$, cover $K$. Put
\[
L=\bigcup_{j=1}^m\gamma_{x_j}([0,1]),
\qquad
E=L\cup\bigcup_{j=1}^mV_{x_j}.
\]
Then $L$ is connected (all the paths start at $x_0$), each $V_{x_j}$ is connected and contains the point $x_j\in L$, so $E$ is connected, and hence so is its closure $\overline E\subseteq L\cup\bigcup_j\overline{V_{x_j}}\subseteq C$. The set $\overline E$ is a compact subset of $C\subseteq\{f>0\}$, so $f\geq\delta$ on $\overline E$ for some $\delta>0$. Consequently, for all $n$ with $\varepsilon_n<\delta$ we have $f_n>0$ on $\overline E$, i.e.\ $E\subseteq X\setminus f_n^{-1}(\{0\})$.

On the other hand, $f_n(a_n)=f_n(b_n)=1$ gives $f(a_n),f(b_n)\geq1-\varepsilon_n>\lambda$ for all large $n$, so $a_n,b_n\in K\subseteq\bigcup_jV_{x_j}\subseteq E$. Hence, for all sufficiently large $n$, both $a_n$ and $b_n$ belong to the connected set $E\subseteq X\setminus f_n^{-1}(\{0\})$, so they lie in the same connected component of $X\setminus f_n^{-1}(\{0\})$, contradicting the choice of $a_n,b_n$.

Thus $f\in\mathcal I(C(X))$, and $\mathcal I(C(X))$ is closed.
\end{proof}

\begin{remark}
Theorem~\ref{thm:CX} suggests a natural entry in the dictionary between topological properties of spaces and corresponding properties of commutative $C^*$-algebras. Namely, for a compact locally path connected space $X$, the set $\mathcal I(C(X))$ is closed. Thus, in the commutative setting, local path connectedness provides a sufficient topological condition for the closedness of $\mathcal I(C(X))$.

This condition is not necessary. For example, the spectrum of $\ell^\infty$ is extremely disconnected, and hence far from being locally path connected, while $\ell^\infty$ is a von Neumann algebra, so that $\mathcal I(\ell^\infty)$ is norm closed by Theorem~\ref{thm:main}(5). On the other hand, some hypothesis on $X$ is needed: for the compact space $X=\mathbb N\cup\{\infty\}$, which is not locally connected, $\mathcal I(C(X))$ is not closed (Theorem~\ref{thm:ell} below with $A=\mathbb C$). Thus Theorem~\ref{thm:CX} should be viewed as a sufficient topological criterion rather than as a characterization. Regarding closedness of $\mathcal I(\ell^\infty(A))$ we have the following more general result.
\end{remark}

\begin{theorem}\label{thm:ell}
Let $A$ be a $C^*$-algebra. Then the following assertions hold:
\begin{itemize}
\item If $\mathcal I(A)$ is norm closed, then $\mathcal I(\ell^{\infty}(A))$ is norm closed.
\item If $A$ is unital and has no nontrivial projection, then $\mathcal I(\ell_c(A))$ is not norm closed. Here $\ell_c(A)$ denotes the $C^*$-algebra of norm convergent sequences in $A$ with the supremum norm. In particular $\mathcal I(\ell_c(\mathbb C))=\mathcal I(C(\mathbb N\cup\{\infty\}))$ is not closed.
\end{itemize}
\end{theorem}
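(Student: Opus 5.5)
For the first assertion I would argue coordinatewise. Let $x^{(k)}=(x^{(k)}_n)_n\in\mathcal I(\ell^\infty(A))$ converge in norm to $x=(x_n)_n$. Then $x\ge0$ and $\|x\|=1$.

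Write $x^{(k)}=a^{(k)}+b^{(k)}$ with $a^{(k)}b^{(k)}=0$ and $\|a^{(k)}\|=\|b^{(k)}\|=1$. Coordinatewise, $x^{(k)}_n=a^{(k)}_n+b^{(k)}_n$ with orthogonal positive contractions. The natural idea is to split the index set, as in Proposition~\ref{prop:direct-sum}, according to where the norms of $a^{(k)}$ and $b^{(k)}$ are (almost) attained. I would distinguish two cases.

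\emph{Case A: some single coordinate carries both norms in the limit.} Suppose there is an $n$ with $x^{(k)}_n\in\mathcal I(A)$ for infinitely many $k$. Since $x^{(k)}_n\to x_n$ and $\mathcal I(A)$ is closed, $x_n=c+d\in\mathcal I(A)$. Then $x=(c\text{ at }n,\ x_m\text{ elsewhere})+(d\text{ at }n,\ 0\text{ elsewhere})$ exhibits $x\in\mathcal I(\ell^\infty(A))$.

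\emph{Case B: otherwise.} For large $k$ the near-suprema of $\|a^{(k)}_n\|$ and $\|b^{(k)}_n\|$ must then occur in different coordinates, or be spread over infinitely many coordinates. The goal is to find two disjoint index sets $S,T\subset\mathbb N$ with
\[
\sup_{n\in S}\|x_n\|=\sup_{n\in T}\|x_n\|=1 .
\]
Given such sets, $x=x\chi_S+x\chi_{\mathbb N\setminus S}$ is an orthogonal decomposition with both summands of norm one, which proves $x\in\mathcal I(\ell^\infty(A))$.

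Finding $S$ and $T$ uses the following observation. If there are infinitely many coordinates $n$ with $\|x_n\|>1-\varepsilon$ for every $\varepsilon$, split them alternately. Otherwise the sup is attained in finitely many coordinates, and I would use $\|x^{(k)}-x\|\to0$ to transfer the two separated near-maxima of $a^{(k)}$ and $b^{(k)}$ to $x$.

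The main obstacle is the subcase where the norm of $x$ is attained in only one coordinate $n_0$, and $a^{(k)},b^{(k)}$ both have their norm near $n_0$ but not exactly in it. One then has to show that $x^{(k)}_{n_0}$ is within $o(1)$ of $\mathcal I(A)$. The trick is to write $x^{(k)}_{n_0}=a^{(k)}_{n_0}+b^{(k)}_{n_0}$ with $\|a^{(k)}_{n_0}\|,\|b^{(k)}_{n_0}\|\to1$, and renormalize by the functional calculus $f_k(t)=\min(t/(1-\eta_k),1)$. This is a perturbation of size $O(\eta_k)$ that preserves orthogonality and gives norm-one summands. Hence $x^{(k)}_{n_0}$ lies within $O(\eta_k)$ of $\mathcal I(A)$, so $x_{n_0}\in\overline{\mathcal I(A)}=\mathcal I(A)$, and Case A applies.

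For the second assertion, take $A$ unital and projectionless. I would exhibit $f_m\in\mathcal I(\ell_c(A))$ converging to $\mathbf 1=(1,1,\dots)$. Take $f_m=(1,\dots,1,1-1/m,1,1,\dots)$ with $1-1/m$ placed in coordinate $m$. This sequence is convergent, and
\[
f_m=(1,\dots,1,0,0,\dots)+(0,\dots,0,1-1/m,1,1,\dots)
\]
is a decomposition into orthogonal positive elements of norm one, since the second summand has norm $1$ from its tail. Thus $\|f_m-\mathbf 1\|=1/m\to0$.

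It remains to show $\mathbf 1\notin\mathcal I(\ell_c(A))$. By Theorem~\ref{thm:main}(1), if $\mathbf 1=a+b$ then $\mathbf 1$ is invertible, so $p=a$ is a projection in $\ell_c(A)$. Each coordinate $p_n$ is then $0$ or $1$, because $A$ has only trivial projections. Convergence of $(p_n)$ forces $p_n$ to be eventually constant, with limit $p_\infty$ which is also $0$ or $1$. Say $p_\infty=1$; then $b=\mathbf 1-p$ vanishes for large $n$, and $\|b\|=1$ still holds since some $b_n=1$. So this alone does not give a contradiction. Indeed $(1,0,0,\dots)+(0,1,1,\dots)$ shows $\mathbf 1\in\mathcal I$, and the choice of limit point must be changed.

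So instead I would take a limit with no invertibility available: $g=(h,1,1,\dots)$, where $h\in A$ is a positive element of norm one with $\|h\|<1$ impossible... Rather, a cleaner choice is the limit $\mathbf 1$ in the commutative picture replaced by $x=(x_n)$ with $x_n=1$ only in the limit. Take $x_n=(1-1/n)\cdot 1$ with $x_\infty=1$. Approximants of the form $(x_1,\dots,x_m,1,1,\dots)$ lie in $\mathcal I$ via a split at the tail. However, $x\notin\mathcal I$: any orthogonal decomposition $x=a+b$ makes $x$ invertible, so $a x^{-1}$ is a projection. Its coordinates are eventually constant in $\{0,1\}$, so one summand vanishes eventually, and that summand has norm at most $\sup_{n\le N}\|x_n\|<1$, a contradiction. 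Verifying this last norm estimate is the key point of the construction.
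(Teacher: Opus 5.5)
Your argument follows the paper's in both parts. In the first part you run the case analysis directly on the limit $x$ (infinitely many near-maximal coordinates; at least two maximal coordinates; a unique maximal coordinate $n_0$). The paper instead first proves the characterization by conditions (a)/(b) and then checks that they survive limits; this difference is only a reorganization.

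In the unique-maximum subcase the renormalization by $f_k$ is unnecessary. Also, ``both have their norm near $n_0$ but not exactly in it'' is not a justification. What you need, and should state, is the gap that your ``otherwise'' branch provides: $\gamma:=1-\sup_{n\neq n_0}\|x_n\|>0$. Then for $\|x^{(k)}-x\|<\gamma$ you get $\sup_{n\neq n_0}\|a^{(k)}_n\|\le\sup_{n\neq n_0}\|x^{(k)}_n\|\le 1-\gamma+\|x^{(k)}-x\|<1$. Hence $\|a^{(k)}_{n_0}\|=1$, likewise $\|b^{(k)}_{n_0}\|=1$, and $x^{(k)}_{n_0}\in\mathcal I(A)$ exactly. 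Closedness of $\mathcal I(A)$ then gives $x_{n_0}\in\mathcal I(A)$; this is the paper's step ``(b) fails $\Rightarrow$ (a)''. The remarks about transferring separated near-maxima can be dropped, since with at least two maximal coordinates you split directly.

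In the second part you arrive at the paper's element $x_n=(1-\frac1n)1$ and the paper's projection argument, but one step is false as written. The element $x$ is not invertible in $\ell_c(A)$, because $x_1=0$, so $ax^{-1}$ does not exist. Argue coordinatewise instead, as the paper does:
\begin{itemize}
\item $0\le a_1,b_1\le x_1=0$ forces $a_1=b_1=0$;
\item for $n\ge2$, the identity $a_nx_n=a_n^2$ shows that $a_n/(1-\frac1n)$ is a projection of $A$, hence equals $0$ or $1$;
\item since $(a_n)$ converges, so do these projections, and therefore they are eventually constant.
\end{itemize}
Your norm estimate then finishes the proof. Reindexing to $x_n=(1-\frac1{n+1})1$, which is invertible, also works.

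You should also specify how the approximants $(x_1,\dots,x_m,1,1,\dots)$ are split. Isolating the single coordinate $m+1$ gives two orthogonal norm-one summands. The head/tail split $(x_1,\dots,x_m,0,\dots)+(0,\dots,0,1,1,\dots)$ fails, because the head has norm $1-\frac1m$.

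Finally, remove the abandoned attempt with $\mathbf 1$, which, as you noticed, does lie in $\mathcal I(\ell_c(A))$. Remove also the broken sentence about $g=(h,1,1,\dots)$.
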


\begin{proof}
\emph{First assertion.} We first characterize $\mathcal I(\ell^\infty(A))$. Let $x=(x_n)\in\ell^\infty(A)$ with $x_n\geq0$ for all $n$ and $\|x\|=\sup_n\|x_n\|=1$. We claim that $x\in\mathcal I(\ell^\infty(A))$ if and only if one of the following holds:
\begin{itemize}
\item[(a)] $x_j\in\mathcal I(A)$ for some index $j$;
\item[(b)] for every $\delta>0$ there are two distinct indices $i\neq j$ with $\|x_i\|>1-\delta$ and $\|x_j\|>1-\delta$.
\end{itemize}

Suppose $x=\alpha+\beta$ with $\alpha,\beta\in\ell^\infty(A)_+$, $\alpha\beta=0$ and $\|\alpha\|=\|\beta\|=1$, and suppose that (b) fails. Then there is $\delta>0$ such that at most one index has $\|x_n\|>1-\delta$; since $\sup_n\|x_n\|=1$, exactly one index $j$ does. As $\|x_n\|=\max\{\|\alpha_n\|,\|\beta_n\|\}\leq1-\delta$ for $n\neq j$, the suprema $\sup_n\|\alpha_n\|=1$ and $\sup_n\|\beta_n\|=1$ can only be attained at $n=j$; thus $\|\alpha_j\|=\|\beta_j\|=1$, $\alpha_j\beta_j=0$, and $x_j=\alpha_j+\beta_j\in\mathcal I(A)$. So (a) holds.

Conversely, suppose (a) holds with $x_j=a+b$, $a,b\in A_+$, $ab=0$, $\|a\|=\|b\|=1$. Let $\alpha$ have $a$ in position $j$ and $0$ elsewhere, and let $\beta$ have $b$ in position $j$ and $x_n$ in position $n\neq j$. Then $\alpha,\beta\geq0$, $\alpha\beta=0$, $\|\alpha\|=1$, $\|\beta\|=1$ and $\alpha+\beta=x$. Suppose (b) holds. If there are $i\neq j$ with $\|x_i\|=\|x_j\|=1$, let $\alpha$ be $x_i$ in position $i$ and $0$ elsewhere, and $\beta=x-\alpha$. Otherwise at most one index (possibly none) has norm exactly $1$, and (b) yields pairwise distinct indices $n_1,n_2,\ldots$, all different from that index, with $\|x_{n_k}\|\to1$. Let $\alpha$ agree with $x$ at the indices $n_{2k}$ and vanish elsewhere, and $\beta=x-\alpha$. Then $\alpha\beta=0$, $\|\alpha\|=1$, and $\|\beta\|\geq\sup_k\|x_{n_{2k+1}}\|=1$, so $x\in\mathcal I(\ell^\infty(A))$. This proves the characterization.

Now assume $\mathcal I(A)$ is closed and let $x^{(m)}\in\mathcal I(\ell^\infty(A))$ converge in norm to $x$; put $\varepsilon_m=\|x^{(m)}-x\|$. Then $x\geq0$ and $\|x\|=1$. We show that $x$ satisfies (a) or (b). If (b) holds for infinitely many $m$, then, given $\delta>0$, choose such an $m$ with $1/m+\varepsilon_m<\delta$ and two distinct indices $i\neq j$ with $\|x^{(m)}_i\|,\|x^{(m)}_j\|>1-1/m$; then $\|x_i\|,\|x_j\|>1-\delta$, so $x$ satisfies (b). Otherwise (a) holds for all large $m$, with indices $j_m$. If some index $j$ equals $j_m$ for infinitely many $m$, then $x_j=\lim x^{(m)}_j\in\mathcal I(A)$ because $\mathcal I(A)$ is closed, and (a) holds for $x$. If every index occurs only finitely often among the $j_m$, then for $\delta>0$ pick $m$ with $\varepsilon_m<\delta$ and then $m'$ with $\varepsilon_{m'}<\delta$ and $j_{m'}\neq j_m$; the indices $j_m\neq j_{m'}$ satisfy $\|x_{j_m}\|\geq1-\varepsilon_m>1-\delta$ and likewise for $j_{m'}$, so (b) holds for $x$. In all cases $x\in\mathcal I(\ell^\infty(A))$ by the characterization, which proves the first assertion.

\emph{Second assertion.} Let $a=(a_n)\in\ell_c(A)$ with $a_n=(1-\frac1n)1$. We show that $a\notin\mathcal I(\ell_c(A))$. Suppose $a=\alpha+\beta$ with $\alpha,\beta\in\ell_c(A)_+$, $\alpha\beta=0$, $\|\alpha\|=\|\beta\|=1$. For $n\geq2$ we have $\alpha_n a_n=\alpha_n(\alpha_n+\beta_n)=\alpha_n^2$, so $p_n=\alpha_n/(1-\frac1n)$ is a projection; as $A$ has no nontrivial projection, $\alpha_n\in\{0,a_n\}$ and then $\beta_n=a_n-\alpha_n$. Let $S=\{n\geq2:\alpha_n=a_n\}$ and $T=\{n\geq2:\alpha_n=0\}$. If both $S$ and $T$ were infinite, $\alpha$ would take the values $a_n\to1$ along $S$ and $0$ along $T$ and would not converge (as $1\neq0$ in $A$). Hence $S$ or $T$ is finite. If $S$ is finite, then $\|\alpha\|=\max_{n\in S}(1-\frac1n)<1$ (or $0$), a contradiction; if $T$ is finite, the same argument applied to $\beta$ gives $\|\beta\|<1$, again a contradiction. So $a\notin\mathcal I(\ell_c(A))$.

On the other hand, for $m\geq2$ define $b_m\in\ell_c(A)$ by $b_m(m)=1$ and $b_m(n)=a_n$ for $n\neq m$. Let $\alpha$ be the sequence with $1$ in position $m$ and $0$ elsewhere, and $\beta=b_m-\alpha$, so that $\beta(n)=a_n$ for $n\neq m$ and $\beta(m)=0$. Then $\alpha,\beta\in\ell_c(A)_+$, $\alpha\beta=0$, $\|\alpha\|=1$ and $\|\beta\|=\sup_{n\neq m}(1-\frac1n)=1$, so $b_m\in\mathcal I(\ell_c(A))$. Finally $\|b_m-a\|=\frac1m\to0$. Hence $\mathcal I(\ell_c(A))$ is not norm closed.
\end{proof}

\begin{remark}

The product structure  gives another natural motivation for the study of this property. If $X$ and $Y$ are compact locally path connected spaces, then $X\times Y$ is again compact and locally path connected, and
\[
C(X\times Y)\cong C(X)\otimes_{\min} C(Y).
\]
Consequently, Theorem~\ref{thm:CX} yields the closedness of $\mathcal I(C(X)\otimes_{\min}C(Y))$. This suggests the following question in the noncommutative setting:
\[
\text{If $\mathcal I(A)$ and $\mathcal I(B)$ are closed, is $\mathcal I(A\otimes_{\min}B)$ necessarily closed?}
\]
\end{remark}

\section*{Statements and Declarations}

\subsection*{Competing Interests}
The author declares that there are no competing interests.

\subsection*{Data Availability}
No datasets were generated or analysed during the current study.

\subsection*{Use of AI Tools}
During the preparation of this manuscript, the author used AI-assisted
tools for language editing, LaTeX formatting, and discussion of
mathematical arguments. The author takes full responsibility for the
content, mathematical results, and final version of the manuscript.


\begin{thebibliography}{99}

\bibitem{AP}
C.~A. Akemann and G.~K. Pedersen,
\emph{Ideal perturbation of elements in $C^*$-algebras},
Math. Scand. \textbf{41} (1977), 117--139.

\bibitem{KATO}
T.~Kato,
\emph{Perturbation Theory for Linear Operators},
Springer-Verlag, Berlin, 1980.

\bibitem{Lee2004}
Wha-Suck Lee,
\emph{Ideal perturbation of elements in $C^*$-algebras},
M.Sc. dissertation, University of Pretoria, 2004.

\bibitem{Nagy2013}
G.~Nagy,
\emph{Isometries on positive operators of unit norm},
Publ. Math. Debrecen \textbf{82} (2013), no.~1, 183--192.

\bibitem{Nagy2018}
G.~Nagy,
\emph{Isometries of spaces of normalized positive operators under the
operator norm},
Publ. Math. Debrecen \textbf{92} (2018), no.~1--2, 243--254.

\bibitem{Peralta2018}
A.~M. Peralta,
\emph{Characterizing projections among positive operators in the unit
sphere},
preprint, arXiv:1804.04507.

\bibitem{PeraltaSaavedra2026}
A.~M. Peralta and P.~Saavedra,
\emph{A Metric Characterization of Projections Among Positive Norm-One
Elements in Unital $C^*$-Algebras},
Results Math. \textbf{81} (2026), Article 142.

\bibitem{Taghavi2013}
A.~Taghavi,
\emph{A Note on Positive Zero Divisors in $C^*$-Algebras},
arXiv:1301.3129 (2013).

\end{thebibliography}
\end{document}